\newtheorem{theorem}{Theorem}
\newtheorem{definition}[theorem]{Definition}
\newtheorem{proposition}[theorem]{Proposition}
\newtheorem{remark}[theorem]{Remark}
\newenvironment{proof}[1][Proof]{\noindent\textbf{#1.} }{\ \rule{0.5em}{0.5em}}
\begin{document}

\title{The work of Jorge Ize regarding the $n$-body problem}
\author{C. Garc\'{\i}a-Azpeitia}
\maketitle

\begin{abstract}
In this paper we present a summary of the last works of Jorge Ize regarding
the global bifurcation of periodic solutions from the equilibria of a
satellite attracted by $n$ primary bodies. We present results on the global
bifurcation of periodic solutions for the primary bodies from the Maxwell's
ring, in the plane and in space, where $n$ identical masses on a regular
polygon and one central mass are turning in a plane at a constant speed. The
symmetries of the problem are used in order to find the irreducible
representations, and with the help of the orthogonal degree theory, all the
symmetries of the bifurcating branches. The results presented in this paper
were done during the Ph.D. of the author under the direction of Jorge Ize (see
\cite{GaIz10,GaIz11,GaIz12,GaIz13,GaIz14}). This paper is dedicated to his memory.

\end{abstract}

{This paper is devoted to present the results of the author in collaboration
to Jorge Ize regarding the movement of a satellite attracted by $n$ primary
bodies}. In particular, when the primary bodies form the polygonal relative
equilibrium corresponding to $n$ identical masses arranged on a regular
polygon with one mass in the centre. This model was posed by Maxwell in order
to explain the stability of Saturn's rings.

For this polygonal relative equilibrium, we give also a description of
bifurcation of planar and spatial periodic solutions. According to the value
of the central mass, there are up to $2n$ branches of planar periodic
solutions, with different symmetries, and up to $n$ additional branches, with
non trivial vertical components, if some non resonance condition is
satisf{ied}. The linearization of the system is degenerated due to rotational
symmetries. These facts imply that the classical bifurcation results for
periodic solutions may not be applied directly. The proof is carried on with
the use of a topological degree for maps that commute with symmetries and are
orthogonal to the infinitesimal generators for these symmetries.

{We also expose} the global bifurcation of periodic solutions for a satellite
attracted by $n$ primary bodies. These solutions will form a continuum in the
plane of the primaries and other solutions outside the plane. A particular
attention is given to the case where $n+1$ primaries form the Maxwell's Saturn ring.

In order to explain the results, {we give} a short description of the steps to
prove the bifurcation theorem. The ideas { we} follow are from the book
\cite{IzVi03}, where general bifurcation theorems are proven. In addition, in
\cite{Ga10} there is a systematic application to Hamiltonian systems. The
results exposed here for the $n$-body problem and the satellite are from the
papers \cite{GaIz10}, \cite{GaIz11}, and \cite{GaIz13}.

\section{Orthogonal degree}

A Hilbert space $V$ is a $\Gamma$-\textbf{representation} if there is a
morphism of groups
\[
\rho:\Gamma\rightarrow GL(V).
\]
The action of the group over a point generate{s} one orbit denoted by $\Gamma
x$. A set $\Omega\subset V$ is $\Gamma$-\textbf{invariant} if it is made of
orbits, this is $\Gamma x\subset\Omega$ for all $x\in\Omega$.

The \textbf{isotropy group }of a point $x$ is defined by
and the \textbf{fixed point space} of the subgroup $H$ is
\[
X^{H}=\{x\in X:hx=x,~\forall h\in H\}.
\]

A space $V$ is an \textbf{irreducible representation} when $V$ does not have
$\Gamma$-invariant proper subspaces. The irreducible representations of the
action of a compact abelian Lie group are always two dimensional, and as such,
equivalent to the complex space .

A function $f:\Omega\rightarrow W$ is $\Gamma$\textbf{-equivariant }if
\[
f(\gamma x)=\gamma^{\prime}f(x)\text{,}%
\]
and $\Gamma$\textbf{-invariant} if the action in the range is trivial,
$f(\gamma x)=f(x)$.

\begin{proposition}
A differentiable $\Gamma$-equivariant function at $x$ satisf{ies}%
\[
df(\gamma x)\gamma=\gamma^{\prime}df(x)
\]
for all $\gamma\in\Gamma$. In particular, the derivative $f^{\prime}(x)$ is a
$\Gamma_{x}$--equivariant map. Moreover, the gradient of a $\Gamma$-invariant
functional is a $\Gamma$-equivariant map when the action is orthogonal.
\end{proposition}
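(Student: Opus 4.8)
The plan is to derive all three assertions from one application of the chain rule to the equivariance identity $f(\gamma x)=\gamma^{\prime}f(x)$. First I would record the elementary fact that, since $\rho:\Gamma\rightarrow GL(V)$, the translation map $L_{\gamma}:V\rightarrow V$, $L_{\gamma}(x)=\gamma x$, is \emph{linear} (and bounded), hence differentiable everywhere with $dL_{\gamma}(x)=\gamma$ for every $x$; similarly the action on the range, $y\mapsto\gamma^{\prime}y$, is linear with constant derivative $\gamma^{\prime}$. Differentiating both sides of $f(L_{\gamma}x)=\gamma^{\prime}f(x)$ at the point $x$ and applying the chain rule then gives $df(\gamma x)\,\gamma=\gamma^{\prime}\,df(x)$, which is the first displayed formula. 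The only thing to be careful about here is the differentiability hypothesis: for $df(\gamma x)$ to make sense one reads ``differentiable at $x$'' as differentiability along the whole orbit $\Gamma x$, which in all the applications of the paper holds because $f$ is $C^{1}$.

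Second, I would specialize the identity to $\gamma\in\Gamma_{x}$. By definition of the isotropy group, $\gamma x=x$, so the left-hand side collapses to $df(x)\gamma$ and the formula becomes $df(x)\gamma=\gamma^{\prime}df(x)$. This says exactly that the bounded linear operator $f^{\prime}(x)=df(x)$ intertwines the restricted representations of $\Gamma_{x}$ on $V$ and on $W$, i.e. it is $\Gamma_{x}$-equivariant.

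Third, for the gradient statement let $\psi:\Omega\rightarrow\mathbb{R}$ be a $\Gamma$-invariant $C^{1}$ functional. Then the action on the range is trivial, $\gamma^{\prime}=\mathrm{Id}$, and the first part yields $d\psi(\gamma x)\,\gamma=d\psi(x)$. Writing the differential through the inner product, $d\psi(y)v=\langle\nabla\psi(y),v\rangle$, this reads $\langle\nabla\psi(\gamma x),\gamma v\rangle=\langle\nabla\psi(x),v\rangle$ for all $v\in V$. Now I invoke that the action is orthogonal: $\langle\gamma a,\gamma b\rangle=\langle a,b\rangle$, equivalently $\gamma^{*}\gamma=I$, so $\gamma^{*}=\gamma^{-1}$. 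Moving $\gamma$ across the pairing gives $\langle\gamma^{*}\nabla\psi(\gamma x),v\rangle=\langle\nabla\psi(x),v\rangle$ for all $v$, whence $\gamma^{-1}\nabla\psi(\gamma x)=\nabla\psi(x)$, that is, $\nabla\psi(\gamma x)=\gamma\,\nabla\psi(x)$. This is precisely $\Gamma$-equivariance of $\nabla\psi$ with the same action in domain and range.

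The whole argument is bookkeeping, so there is no real obstacle; the one conceptual point worth flagging is the asymmetry between the first two claims, which use only that the action is linear, and the last one, where orthogonality of the representation is exactly what converts the adjoint $\gamma^{*}$ that naturally shows up in $d\psi(\gamma x)\gamma$ back into $\gamma^{-1}$, turning a ``twisted'' relation into an honest equivariance. I would state the orthogonality hypothesis explicitly for that reason, and otherwise present the proof in the three short steps above.
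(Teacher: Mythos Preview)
Your proof is correct and follows essentially the same route as the paper. The paper derives the first identity by writing out the difference quotient $f(\gamma(x+y))-f(\gamma x)=\gamma'[f(x+y)-f(x)]$ and invoking uniqueness of the derivative, which is exactly your chain-rule step unpacked; for the gradient claim the paper compresses your inner-product computation into the transpose identity $\gamma^{T}\nabla f(\gamma x)=[Df(\gamma x)\gamma]^{T}=Df(x)^{T}=\nabla f(x)$, which is the matrix formulation of your $\gamma^{*}=\gamma^{-1}$ argument.
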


\begin{proof}
The first statement follows from the uniqueness of the derivative, and from
the equality%
\begin{align*}
df(\gamma x)\gamma y+o(y)  &  =f(\gamma(y+x))-f(\gamma x)\\
&  =\gamma^{\prime}[f(y+x)-f(x)]=\gamma^{\prime}df(x)y+o(y).
\end{align*}
The second statement is a consequence of%
\[
\gamma^{T}\nabla f(\gamma x)=[Df(\gamma x)\gamma]^{T}=Df(x)^{T}=\nabla
f(x)\text{.}%
\]

\end{proof}

Let $\gamma$ be an element of a torus $\gamma=(\varphi_{1},...,\varphi_{n})\in
T^{n}$, with $\varphi_{j}\in(-\pi,\pi)$. The $j$-th \textbf{generator} of
\ the torus $T^{n}$ is the vector fields tangent to the orbit%
\[
A_{j}x=\frac{\partial}{\partial\varphi_{j}}(\gamma x)|_{\gamma=0}\text{.}%
\]

The gradient of a $\Gamma$-invariant function is $\Gamma$-equivariant by the
previous proposition. Moreover, this kind of gradient is orthogonal to the
generators because%
\[
\left\langle \nabla f(x),A_{j}x\right\rangle =\frac{\partial}{\partial
\varphi_{j}}f(\gamma x)|_{\gamma=0}=0.
\]

A general $\Gamma$-equivariant map is called $\Gamma$\textbf{-orthogonal} if
it satisfy
\[
\left\langle f(x),A_{j}x\right\rangle =0\text{ for all }x\in\Omega.
\]

The following definition of $\Gamma$-orthogonal degree for compact abelian Lie
groups is due to J. Ize and A. Vignoli, see \cite{IzVi99} .

Let $\Gamma$ be a compact abelian Lie group, an $\Omega$ a $\Gamma$-invariant
domain of $V$. Let $f_{0}$ an $f_{1}$ two $\Gamma$-orthogonal maps which are
non-zero on the boundary $\partial\Omega$. It is said that two maps $f_{0}$
and $f_{1}$ are $\Gamma$\textbf{-orthogonal homotopic} when there is a
continuous deformation
\[
f_{t}:\bar{\Omega}\times\lbrack0,1]\rightarrow E\text{,}%
\]
where the map $f_{t}$ is $\Gamma$-orthogonal and non-zero in the boundary
$\partial\Omega$ for each step $t$ .

The ball $B=\{x\in V:\left\Vert x\right\Vert \leq r\}$ is $\Gamma$-invariant
when the representation in $V$ is an isometry. In this case, let { us} define
$\mathcal{C}$ as the set of $\Gamma$-orthogonal maps of the form%
\[
f:\partial([0,1]\times B)\rightarrow\mathbb{R}\times V-\{0\}.
\]
Since the boundary of $[0,1]\times B_{r}$ is isomorph{ic} to the sphere
$S^{V}$, and since the set $\mathbb{R}\times V-\{0\}$ is $\Gamma$-homotopic to
$S^{V}$, then the map $f$ may be thought from $S^{V}$ into $S^{V}$.

Since the $\Gamma$-orthogonal homotopy form{s} an equivalent relation in
$\mathcal{C}$, then one define $\Pi_{\perp}[S^{V}]$ as the set of equivalent
classes of $\mathcal{C}$ and
\[
\lbrack f]_{\perp}\in\Pi_{\perp}[S^{V}]
\]
as the equivalent class of $f$ .

Shrinking the top $\{0\}\times B$ and the bottom $\{1\}\times B$ to the point
$(1,0)$, one may prove that all homotopy classes $[f]_{\perp}$ have one
function such that $f(t,x)=(1,0)$ for $t\in\{0,1\}$. With these functions one
may define the sum of homotopy classes as $[f]_{\perp}+[g]_{\perp}=[f\oplus
g]_{\perp}$ with%
\[
f\oplus g=\left\{
\begin{array}
[c]{c}%
f(2t,x)\text{ for }t\in\lbrack0,1/2],\\
g(2t-1,x)\text{ for }t\in\lbrack1/2,1].
\end{array}
\right.
\]

With this sum, the set $\Pi_{\perp}[S^{V}]$ has a group structure. The
identity is the map $[(1,0)]_{\perp}$, and the inverse of some class
$[f]_{\perp}$ is the class $[f(1-t,x)]_{\perp}$. Moreover, one may prove that
the group $\Pi_{\perp}[S^{V}]$ is abelian when $V^{\Gamma}$ is non trivial.

To define the $\Gamma$-orthogonal degree of a map $f:\Omega\rightarrow V$, it
is necessary to extend the function $f$ to a ball, $\bar{f}:$ $\Omega\subset
B\rightarrow V$. Also, one needs a Urysohn $\Gamma$-invariant map with value
$0$ in $\bar{\Omega}$, and value $1$ in $B\backslash N$, where $N $ is a small
neighborhood of $\bar{\Omega}$. The existence of the Urysohn map $\varphi$ and
the extension $\bar{f}$ follows from the $\Gamma$-orthogonal extension theorem
of Borsuk. The proof of this theorem is only for actions of compact abelian
Lie groups on finite spaces, see \cite{IzVi03} . \begin{figure}[th]
\centering
\begin{pspicture}(-2,-2)(6,2)\SpecialCoor
\psellipse(0,1.5)(2,.5)
\psellipse[linewidth=0pt,fillstyle=solid,
fillcolor=lightgray](0,0)(2,.5)
\psline(2,-1.5)(2,1.5)
\psline{->}(-2,-1.5)(-2,2)
\rput[b](-2,2){$t$}
\rput[r](-2,0){\small$1/2\;$}
\psccurve[linewidth=2pt,fillstyle=solid,
fillcolor=white](-.7,-.2)(.35,-.2)(.6,.1)(-.45,.2)
\psellipse[linewidth=2pt,fillstyle=solid,
fillcolor=lightgray](0,0)(.2,.15)
\pscircle(5,0){1}
\psellipse[linestyle=dashed](5,0)(1,.3)
\psdot(5,0)\rput[l](5,0){$\ 0$}
\psarc{<-}(3,0){1}{45}{135}
\rput(3.5,1.5){\small$(2t+2\varphi-1,\bar{f})$}
\pcline[linestyle=dashed]{->}(-.3,-1)(-.3,0)\lput*{0}(0){\small$\Omega$}
\rput(-.3,-1.2){\small$f=\bar{f}$}
\rput(-.3,-1.5){\small$\varphi=0$}
\pcline[linestyle=dashed]{->}(1.2,-1.5)(1.2,0)\lput*{0}(0){\small$\varphi=1$}
\psellipticarc(0,-1.5)(2,.5){180}{0}
\NormalCoor\end{pspicture}
\caption{Degree definition.}%
\end{figure}
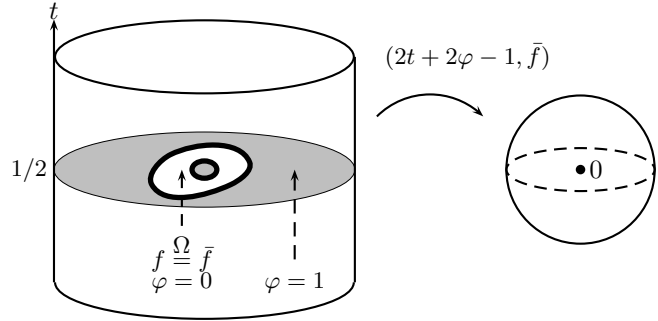

\begin{definition}
\label{1EnDeDe}The $\Gamma$-orthogonal degree of $f$\ is defined as the
homotopy class%
\[
\deg_{\perp}(f;\Omega)=[(2t+2\varphi-1,\bar{f})]_{\perp}\in\Pi_{\perp}%
[S^{V}].
\]

\end{definition}

When the domain is a ball, the degree is just the homotopy class of the
suspension $\deg_{\perp}(f;B)=[(2t-1,f)]_{\perp}$, and this definition is
equivalent to the Brouwer degree if the action of the group is trivial.

In \cite{IzVi03} it is proven that for each isotropy group of $\Gamma$, $H\in
Iso(\Gamma)$, the group $\Pi_{\perp}[S^{V}]$ has a copy of a group
isomorph{ic} to the group $\mathbb{Z}$, this is%
\[
\Pi_{\perp}[S^{V}]=\bigoplus_{H\in Iso(\Gamma)}\mathbb{Z}\text{.}%
\]

Moreover, the degree of the map $f$ is
\[
\deg_{\perp}(f;\Omega)=\sum_{H\in Iso(\Gamma)}d_{H}[F_{H}]_{\perp}\text{,}%
\]
where $[F_{H}]_{\perp}$ is the generator of one $\mathbb{Z}$ corresponding to
each isotropy group $H\in Iso(\Gamma)$, and $d_{H}$ is just an integer.

The orthogonal degree has the known properties of a degree: existence,
excision and $\Gamma$-orthogonal homotopy invariance. In this case, the
existence property means that the map $f$ must have a zero in $\Omega\cap
V^{H}$ if $d_{H}\neq0$.

\begin{remark}
For a $k$-dimensional orbit, with a tangent space generated by $k$ of the
infinitesimal generators of the group, one uses a Poincar\'{e} section for the
map augmented with $k$ Lagrange-like multipliers for the generators. (See the
construction in \cite{IzVi03}, section 4.3). For instance, for the action of
$SO(2)$, the study of zeros of the equivariant map $F(x)$, orthogonal to the
generator $Ax$, is equivalent to the study of the zeros of $F(x)+\lambda Ax$;
if $x$ is not fixed by the group, i.e., if $Ax$ is not $0$, for which
$\lambda$ is $0$. In this way, one has added an artificial parameter. This
trick has been used very often and, in the context of a topological degree
argument, was called \textquotedblleft orthogonal degree\textquotedblright\ by
Rybicki in \cite{Ry94}. See also \cite{Da85} and \cite{IMV89} for the case of
gradients. The general case of the action of abelian groups was treated in
\cite{IzVi99}. The complete study of the orthogonal degree theory is given in
\cite{IzVi03}, Chapters 2 and 4. From the theoretical point of view, the
theory has to be extended to the action of non-abelian groups and to abstract
infinite dimensional spaces .
\end{remark}

\section{Satellite}

The restricted $n$-body problem is the study of the movement of a satellite
attracted by $n$ primary bodies which are rotating, at a constant angular
speed, around an axis. Since the mass of the satellite is small, one assumes
that the satellite does not perturb the trajectories of the primaries, which
follow the trajectories of relative equilibrium and, as such, are in a plane.

Let $q(t)\in\mathbb{R}^{3}$ be the position of the satellite without mass, and
let $(a_{j},0)$ be the position of a primary body with mass $m_{j}$. Let $J$
be the standard symplectic matrix in $\mathbb{R}^{2}$. In rotating coordinates
$q(t)=(e^{\omega Jt}u(t),z(t))$, $u\in\mathbb{R}^{2}$, Newton's equations
describing the movement of the satellite, with angular speed $\omega=1$, are%
\begin{align}
\ddot{u}+2J\dot{u}-u  &  =-\sum_{j=1}^{n}m_{j}\frac{u-a_{j}}{\left\Vert
(u,z)-(a_{j},0)\right\Vert ^{3}}\text{,}\label{sat}\\
\ddot{z}  &  =-\sum_{j=1}^{n}m_{j}\frac{z}{\left\Vert (u,z)-(a_{j}%
,0)\right\Vert ^{3}}\text{.}\nonumber
\end{align}

One may ask for existence of bifurcation of periodic solutions starting from
the equilibria of the satellite. These solutions will form a continuum in the
plane of the primaries and there are other global branches outside of that
plane. The proof is based on the use of the orthogonal degree.

\subsection{The orthogonal bifurcation map}

Let $H_{2\pi}^{2}(\mathbb{R}^{n})$ be the Sobolev space of $2\pi$-periodic
functions. Define the collision points set as $\Psi=\{a_{1},...,a_{n}\}$, and
the collision-free paths as
\[
H_{2\pi}^{2}(\mathbb{R}^{3}\backslash\Psi)=\{x\in H_{2\pi}^{2}(\mathbb{R}%
^{3}):x(t)\neq a_{j}\}.
\]
Changing variables from $t$ to $t/\nu$, the $2\pi/\nu$-periodic solutions are
zeros of the map%
\begin{align*}
f  &  :H_{2\pi}^{2}(\mathbb{R}^{3}\backslash\Psi)\times\mathbb{R}%
^{+}\rightarrow L_{2\pi}^{2}\\
f(x,\nu)  &  =-\nu^{2}\ddot{x}-2\nu\emph{diag}(J,0)\dot{x}+\nabla V(x)\text{.}%
\end{align*}
where $V$ is the potential
\[
V(u,z)=\left\vert u\right\vert ^{2}/2-\sum_{j=1}^{n}m_{j}\frac{1}{\left\Vert
(u,z)-(a_{j},0)\right\Vert }.
\]

In view of the definitions, the collision-free $2\pi$-periodic solutions are
zeros of the bifurcation operator $f(x,\nu)$. Furthermore, the operator $f$ is
well defined and continuous.

Define the actions of the group $\mathbb{Z}_{2}\times S^{1}$ on $H_{2\pi}%
^{2}(\mathbb{R}^{3}\backslash\Psi)$ as
\[
\rho(\kappa)x=\emph{diag}(1,1,-1)x\text{ and }\rho(\varphi)x=x(t+\varphi).
\]
Since the equation of the satellite is invariant by this reflection, and since
the equation is autonomous, then $f$ is $\mathbb{Z}_{2}\times S^{1}%
$-equivariant. The generator of the group $S^{1}$ in the space $H_{2\pi}^{2}$
is
\[
Ax=\frac{d}{d\varphi}(\rho(\varphi)x)_{\varphi=0}=\dot{x}.
\]
Moreover, the map $f$ is $\mathbb{Z}_{2}\times S^{1}$-orthogonal because it
satisf{ies} the orthogonal condition
\[
\left\langle f(x),\dot{x}\right\rangle _{L_{2\pi}^{2}}=\int_{0}^{2\pi}\left(
-\nu^{2}\left\vert \dot{x}\right\vert ^{2}/2+V(x)\right)  ^{\prime
}dt=0\text{.}%
\]

\begin{remark}
For periodic and non-periodic solutions of the equations, the conservation of
energy is written as
\[
E=-\nu^{2}\left\vert \dot{x}\right\vert ^{2}/2+V(x)=cte.
\]
Thus, one may think that the orthogonal condition is equivalent to
conservation of energy.
\end{remark}

The Fourier transform of the bifurcation map is
\[
f(x)=\sum_{l\in\mathbb{Z}}\left(  l^{2}\nu^{2}x_{l}-2il\nu\emph{diag}%
(J,0)x_{l}+g_{l}\right)  e^{ilt}\text{,}%
\]
where $x_{l}$ and $g_{l}$ are the Fourier modes of $x$ and $\nabla V(x)$ respectively.

Since the matrix
\[
l^{2}\nu^{2}I-2il\nu\emph{diag}(J,0)
\]
is invertible for all $l$'s, except a finite number. One may perform a global
Lyapunov-Schmidt reduction using the global implicit function theorem for
non-collision paths. In this way, one gets the reduced map $f_{1}(x_{1}%
,x_{2}(x_{1},\nu),\nu)$, where $x_{1}$ corresponds to a finite number of modes
and $x_{2}$ to the complement. Moreover, the reduced map is a $\Gamma
$-orthogonal map, see \cite{IzVi03} or \cite{Ga10} for details. Furthermore,
for bifurcation without resonances one may reduce the map to the principal
Fourier mode $l=1$.

For isolated orbits $\Gamma x_{0}$, the degree is calculated in terms of the
linearization at $x_{0}$. Close to an equilibrium $x_{0}$ one has that $\nabla
V(x_{0}+h)=D^{2}V(x_{0})h+o(h)$, then the linearization of the reduced map is%
\begin{align*}
f_{1}^{\prime}(x_{0},\nu)x_{1}  &  =\sum_{\text{finite }l^{\prime}\text{s}%
}M(l\nu)x_{l}e^{ilt}\text{ with }\\
M(\nu)  &  =\nu^{2}I-2i\nu\emph{diag}(J,0)+D^{2}V(x_{0})\text{.}%
\end{align*}
So the linearization of the reduced map is a diagonal matrix with blocks
$M(l\nu)$ for a finite number of $l$'s. For bifurcation without resonances, it
has only the block $M(\nu)$, for the $1$-th Fourier mode.

\subsection{Symmetries}

The action of the element $(\kappa,\varphi)\in\mathbb{Z}_{2}\times S^{1}$
satisfy%
\[
\rho(\kappa,\varphi)x=\rho(\kappa)x(t+\varphi)=\sum_{l}\rho(\kappa
)e^{il\varphi}x_{l}e^{ilt}\text{,}%
\]
thus the action of the group is inherit{ed} on the Fourier modes as
\[
\rho(\kappa,\varphi)x_{l}=\rho(\kappa)e^{il\varphi}x_{l}\text{.}%
\]

Since all the equilibria are planar, the isotropy subgroup of any equilibrium
is $\mathbb{Z}_{2}\times S^{1}$, this means that all equilibria are fixed by
the action of $\mathbb{Z}_{2}\times S^{1}$. When one apply orthogonal degree
to the bifurcation problem, one need to know the irreducible representations
of the action of $\Gamma_{x_{0}}=\mathbb{Z}_{2}\times S^{1}$.

\subsubsection{Planar symmetries}

{In order to simplify the exposition, only the symmetries of the group
$\mathbb{Z}_{2}\times S^{1}$ for the $1$-th mode will be studied. This
correspond to the case without resonances. The space $\mathbb{C}^{3}$
corresponding to the $1$-th mode has two spaces of similar irreducible
representations: $V_{0}=\mathbb{C}^{2}\times\{0\}$ and $V_{1}=\{0\}\times
\mathbb{C}$. This is, the group $\mathbb{Z}_{2}$ acts as $\rho(\kappa)=I$ on
$V_{0}$ , and as $\rho(\kappa)=-1$ on $V_{1}$ .} Consequently, the action of
the group $\mathbb{Z}_{2}\times S^{1}$ in $V_{0}$ for the $1$-th mode is
\[
\rho(\kappa,\varphi)x=e^{i\varphi}x.
\]
Since $(\kappa,0)$ is the only element that fix the points of $V_{0}$, the
isotropy subgroup of the points in $V_{0}$ is generated by $(\kappa,0)$,%
\[
\mathbb{Z}_{2}=\left\langle (\kappa,0)\right\rangle .
\]

Solutions $x=(u,z)$ to the equation (\ref{sat}) with isotropy group
$\mathbb{Z}_{2}$ satisfy%
\[
x(t)=\rho(\kappa)x(t)=\emph{diag}(1,1,-1)x(t)\text{.}%
\]
Therefore, solutions to the equation (\ref{sat}) with symmetry $\mathbb{Z}%
_{2}$ are just planar solutions, i.e. $z(t)=0$.

\subsubsection{Spatial symmetries}

In $V_{1}$ the action of the group $\mathbb{Z}_{2}\times S^{1}$ is
\[
(\kappa,\varphi)x=-e^{i\varphi}x.
\]
Since $(\kappa,\pi)$ is the only element that fix the points of $V_{1}$, thus
the isotropy subgroup for $V_{1}$ is generated by $(\kappa,\pi)$,
\[
\mathbb{\tilde{Z}}_{2}=\left\langle (\kappa,\pi)\right\rangle .
\]

Solutions $x=(u,z)$ to the equation (\ref{sat}) with isotropy group
$\mathbb{\tilde{Z}}_{2}$ satisfy
\[
x(t)=\rho(\kappa,\pi)x(t)=\emph{diag}(1,1,-1)x(t+\pi)\text{,}%
\]
this is%
\[
u(t)=u(t+\pi)\text{ and }z(t)=-z(t+\pi)\text{.}%
\]

Solutions to the equation (\ref{sat}) with these symmetries follows twice the
planar $\pi$-periodic curve $u$, one time with the spatial coordinate $z$ and
a second time with $-z$. Consequently, there is at least one $t_{0}$ where
$z(t_{0})=z(t_{0}+\pi)=0$. For instance, if only one of these zeros exists,
then the solution looks like a spatial eight near the equilibrium. For this
reason, these solutions will be called eight-solutions.

\subsection{Bifurcation theorem}

For bifurcation without resonances, one may reduce the bifurcation study to
the $1$-th Fourier mode. In this case, the $\mathbb{Z}_{2}\times S^{1}%
$-orthogonal degree of the reduced map complemented by the right function is%
\[
\eta_{\mathbb{Z}_{2}}(\nu_{0})[F_{\mathbb{Z}_{2}}]+\eta_{\mathbb{\tilde{Z}%
}_{2}}(\nu_{0})[F_{\mathbb{\tilde{Z}}_{2}}]\text{,}%
\]
where $[F_{\mathbb{Z}_{2}}]$ and $[F_{\mathbb{\tilde{Z}}_{2}}]$ are generators
of one $\mathbb{Z}$ in the homotopy group $\Pi_{\perp}$. The numbers
$\eta_{\ast}(\nu_{0})$ correspond to the change of Morse index of the block
$M(\nu)$ in the space $V_{0}$, for $\mathbb{Z}_{2}$, and in the space $V_{1}$,
for $\mathbb{\tilde{Z}}_{2}$.

From the existence property of the degree, one has a zeros of the bifurcation
map when $\eta(\nu_{0})\neq0$, this is, there is periodic solutions near
$(x_{0},\nu_{0})$ with isotropy group $\mathbb{Z}_{2}$, if $\eta
_{\mathbb{Z}_{2}}(\nu_{0})\neq0$, and with isotropy group $\mathbb{\tilde{Z}%
}_{2}$, if $\eta_{\mathbb{\tilde{Z}}_{2}}(\nu_{0})\neq0$. For resonances one
may have more generators of $\Pi_{\perp}$ corresponding to bifurcation of
harmonic periods of the principal one.

What remains is to analyze the Morse index in the subspaces $V_{0}$ and
$V_{1}$. This is done in \cite{GaIz10}, where one arrives at the following conclusion.

\begin{theorem}
Let $T$\ and $D$\ be the trace and determinant of the Hessian of the potential
in the plane, $V$, at the equilibrium $x_{0}$. If $D<0$, there is one global
bifurcation of planar periodic solutions from $x_{0}$. If $0<D<(2-T/2)^{2}$,
there are two global bifurcations of planar periodic solutions.
\end{theorem}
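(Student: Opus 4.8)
\noindent The plan is to turn the statement into an explicit question about one $2\times 2$ Hermitian matrix. By the discussion preceding the theorem, the coefficient $\eta_{\mathbb{Z}_2}(\nu_0)$ appearing in the $\mathbb{Z}_2\times S^1$-orthogonal degree of the reduced map is the change, as $\nu$ increases through $\nu_0$, of the Morse index (the number of negative eigenvalues) of the restriction to the planar subspace $V_{0}=\mathbb{C}^{2}\times\{0\}$ of the block $M(\nu)=\nu^{2}I-2i\nu\,\mathrm{diag}(J,0)+D^{2}V(x_{0})$, that is, of
\[
M_{0}(\nu)=\nu^{2}I_{2}-2i\nu J+A,\qquad A=D^{2}V(x_{0})\big|_{\text{plane}},\quad \mathrm{tr}\,A=T,\ \det A=D .
\]
Since $V$ is $\mathbb{Z}_2\times S^1$-invariant, whenever $\eta_{\mathbb{Z}_2}(\nu_0)\neq0$ the existence property of the orthogonal degree, in its global Rabinowitz-type form from \cite{IzVi03}, produces a global continuum of periodic solutions with isotropy $\mathbb{Z}_2$ — that is, of planar periodic solutions, as computed in the section on planar symmetries — issuing from $(x_{0},\nu_{0})$. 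So it suffices to count the values $\nu_{0}>0$ at which this Morse index actually jumps.

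\noindent First I would compute the relevant polynomial. With $J=\left(\begin{smallmatrix}0&-1\\1&0\end{smallmatrix}\right)$ and $A=\left(\begin{smallmatrix}a&b\\b&d\end{smallmatrix}\right)$ one has
\[
M_{0}(\nu)=\begin{pmatrix}\nu^{2}+a & b+2i\nu\\ b-2i\nu & \nu^{2}+d\end{pmatrix},\qquad
\det M_{0}(\nu)=(\nu^{2}+a)(\nu^{2}+d)-b^{2}-4\nu^{2}=\nu^{4}+(T-4)\nu^{2}+D ,
\]
so $\det M_{0}(\nu)=p(\nu^{2})$ with $p(s)=s^{2}+(T-4)s+D$. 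Because $M_{0}(\nu)$ is Hermitian its Morse index is locally constant off the zero set of $\det M_{0}$, and at a \emph{simple} root $s_{0}$ of $p$ the determinant changes sign, so exactly one eigenvalue of the $2\times2$ matrix crosses zero and the Morse index jumps by $\pm1$; hence $\eta_{\mathbb{Z}_2}(\sqrt{s_{0}})=\pm1\neq0$ at every simple positive root. For the bookkeeping at the endpoints note $\det M_{0}(0)=D$, while $\det M_{0}(\nu)\to+\infty$ and $\mathrm{tr}\,M_{0}(\nu)=2\nu^{2}+T\to+\infty$ as $\nu\to\infty$, so the Morse index vanishes for all large $\nu$.

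\noindent A structural remark fixes the sign of $T-4$: in the plane $D^{2}V(x_{0})=I_{2}-\sum_{j}m_{j}\nabla^{2}\!\big(1/r_{j}\big)$, and the $2\times2$ Hessian of $1/r_{j}$ at a point of the plane equals $r_{j}^{-3}(3\hat n_{j}\hat n_{j}^{T}-I_{2})$, of trace $r_{j}^{-3}$, so $T=2-\sum_{j}m_{j}r_{j}^{-3}<2$ and thus $4-T>2>0$. Now the two cases. If $D<0$, then $p$ has real roots with product $D<0$, so exactly one, $s_{+}$, is positive; the only candidate value is $\nu_{0}=\sqrt{s_{+}}$, it is a simple root, and since $\det M_{0}(0)=D<0$ the Morse index drops from $1$ to $0$ there, so $\eta_{\mathbb{Z}_2}(\nu_{0})\neq0$: one global bifurcation of planar periodic solutions. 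If $0<D<(2-T/2)^{2}$, observe that $(2-T/2)^{2}=(4-T)^{2}/4$, so the hypothesis is exactly that the discriminant $(T-4)^{2}-4D$ be positive; then $p$ has two distinct real roots, with product $D>0$ and sum $4-T>0$, hence both positive, $0<s_{1}<s_{2}$, and each $\nu_{j}=\sqrt{s_{j}}$ is a simple root of $p$, so $\eta_{\mathbb{Z}_2}(\nu_{j})\neq0$ at both: two global bifurcations. (When $D>(2-T/2)^{2}$ the discriminant is negative, $\det M_{0}(\nu)>0$ for all $\nu$, and this mode contributes no bifurcation, consistently with the theorem.)

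\noindent The routine parts are the determinant identity and the Morse-index table; the place where the precise shape of the hypotheses is genuinely consumed is the requirement that the bifurcation values be real and \emph{positive}, which is where $D<0$ versus $0<D<(2-T/2)^{2}$ enters together with the structural inequality $T<2$. The only non-elementary ingredient is the identification of $\eta_{\mathbb{Z}_2}(\nu_{0})$ with the Morse-index jump of $M_{0}(\nu)$ and the passage from a nonzero degree to a \emph{global} branch; I take this from the orthogonal degree theory of \cite{IzVi03} and the Lyapunov--Schmidt reduction carried out in \cite{GaIz10}, and it is the step I expect to be the main obstacle in a fully self-contained account.
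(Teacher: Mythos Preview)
Your argument is correct and follows precisely the approach the paper outlines: reduce to the planar block $M_{0}(\nu)=\nu^{2}I_{2}-2i\nu J+A$, compute $\det M_{0}(\nu)=\nu^{4}+(T-4)\nu^{2}+D$, and count the positive simple roots of $p(s)=s^{2}+(T-4)s+D$ as the values where the Morse index jumps, invoking the orthogonal degree of \cite{IzVi03} for the passage to a global branch. Your structural observation that $T=2-\sum_{j}m_{j}r_{j}^{-3}<2$, hence $4-T>0$, is exactly the ingredient needed to guarantee that in the case $0<D<(2-T/2)^{2}$ both roots of $p$ are positive; this is the detailed analysis the paper defers to \cite{GaIz10}.
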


\begin{theorem}
Every equilibrium $x_{0}$ has a global bifurcation of periodic eight solutions%
\[
u(t)=u(t+\pi)\text{ and }z(t)=-z(t+\pi)\text{.}%
\]
Moreover, the local branch is truly spatial, $z(t)\neq0$, provided that some
nonresonant condition between the periods of the spatial and the planar
solutions is satisfy.
\end{theorem}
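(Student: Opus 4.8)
The plan is to run the orthogonal degree argument of the previous subsection inside the fixed-point subspace of the eight symmetry and to extract the bifurcation from a single scalar block of the linearization. First I would restrict the reduced $\Gamma$-orthogonal bifurcation map $f_{1}$ to the invariant subspace $V^{\mathbb{\tilde{Z}}_{2}}$ of paths with $u(t)=u(t+\pi)$ and $z(t)=-z(t+\pi)$; by the isotropy computation above these are exactly the candidate eight solutions, and the Urysohn--Borsuk construction behind $\deg_{\perp}$ applies verbatim on this subspace. Because every equilibrium $x_{0}=(u_{0},0)$ is planar and the potential $V(u,z)$ is even in $z$, the Hessian $D^{2}V(x_{0})$ is block diagonal between the $u$-- and $z$--directions; linearizing the $z$--equation in (\ref{sat}) gives $\ddot{z}+\omega_{z}^{2}z=0$ with $\omega_{z}^{2}=\sum_{j}m_{j}\left\Vert u_{0}-a_{j}\right\Vert ^{-3}>0$. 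Hence the block $M(l\nu)=(l\nu)^{2}I-2i(l\nu)\,\emph{diag}(J,0)+D^{2}V(x_{0})$ restricted to the spatial mode $V_{1}=\{0\}\times\mathbb{C}$ with principal index $l=1$ is the purely scalar
\[
M(\nu)|_{V_{1}}=(\nu^{2}-\omega_{z}^{2})\,I .
\]

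Second, as $\nu$ crosses $\nu_{0}:=\omega_{z}$ this block passes from negative to positive definite, so the Morse index of $M(\nu)$ restricted to $V_{1}$ jumps by $\dim_{\mathbb{R}}V_{1}$; thus $\eta_{\mathbb{\tilde{Z}}_{2}}(\nu_{0})=\pm1\neq0$. Taking an isolating $\Gamma$--invariant neighbourhood of the orbit $\Gamma(x_{0},\nu_{0})$ and complementing $f_{1}$ by the interval function as in Definition \ref{1EnDeDe}, the coefficient $d_{\mathbb{\tilde{Z}}_{2}}$ of the generator $[F_{\mathbb{\tilde{Z}}_{2}}]$ in $\deg_{\perp}$ is then nonzero. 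The existence property of the orthogonal degree forces a zero of $f_{1}$ in every such neighbourhood intersected with $V^{\mathbb{\tilde{Z}}_{2}}$, and the standard Rabinowitz-type global alternative (see \cite{IzVi03}) upgrades this into a global continuum of zeros inside $V^{\mathbb{\tilde{Z}}_{2}}$ emanating from $(x_{0},\nu_{0})$ that is either unbounded in $H_{2\pi}^{2}\times\mathbb{R}^{+}$, returns to another bifurcation point, or accumulates on a collision. Since the zeros lying in $V^{\mathbb{\tilde{Z}}_{2}}$ are precisely the $2\pi$--periodic solutions with $u(t)=u(t+\pi)$ and $z(t)=-z(t+\pi)$, this already yields a global bifurcation of eight solutions at every equilibrium, with no resonance assumption needed.

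Finally, to get that the \emph{local} branch is truly spatial I would rule out that the continuum starts off inside the planar slice $\{z\equiv0\}\subset V^{\mathbb{\tilde{Z}}_{2}}$. Inside $V^{\mathbb{\tilde{Z}}_{2}}$ the path $u$ carries only the even Fourier modes (mean value included) and $z$ only the odd ones, and the quadratic part of $\nabla V$ feeds the principal mode $l=1$ of $z$ into the modes $l=0$ and $l=2$ of $u$. The nonresonance hypothesis is exactly that $M(0)=D_{u}^{2}V(x_{0})$ and $M(2\nu_{0})|_{V_{0}}$ are invertible --- equivalently $D\neq0$ and $\nu_{0}$ is not a harmonic of a planar eigenfrequency --- so a second Lyapunov--Schmidt reduction carried out inside $V^{\mathbb{\tilde{Z}}_{2}}$ removes all $u$--directions and the bifurcation equation reduces to the one-orbit equation in $z_{1}\in V_{1}$, whose nontrivial solutions satisfy $z\not\equiv0$. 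I expect the main obstacle to be precisely this last bookkeeping: unlike the purely planar statement, the relevant Fourier modes for $u$ and $z$ differ ($l=0,2$ against $l=1$), so one cannot reduce to a single mode, and one must track the orthogonal degree and the secondary reduction through several modes while making the nonresonance condition explicit --- together with the a priori estimates needed to turn the degree-theoretic zero into a genuine unbounded (or collision-accumulating) global branch.
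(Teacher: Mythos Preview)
Your proposal is correct and follows essentially the same route the paper sketches: compute the spatial block $M(\nu)|_{V_{1}}=\nu^{2}-\omega_{z}^{2}$ with $\omega_{z}^{2}=\sum_{j}m_{j}\|u_{0}-a_{j}\|^{-3}>0$, read off the nonzero jump $\eta_{\tilde{\mathbb Z}_{2}}(\omega_{z})=\pm1$, and invoke the existence and global--continuation properties of the orthogonal degree; the paper defers precisely this Morse--index computation to \cite{GaIz10}, so your write-up is in fact more explicit than the survey but strategically identical. The only cosmetic difference is that you restrict first to the fixed-point space $V^{\tilde{\mathbb Z}_{2}}$ and compute a degree there, whereas the paper keeps the full $\mathbb Z_{2}\times S^{1}$--orthogonal degree and reads off its $[F_{\tilde{\mathbb Z}_{2}}]$--component; your formulation of the nonresonance condition (no even harmonic $2k\nu_{0}$ hitting a planar eigenfrequency, together with $D\neq0$) is also the intended one, stated more concretely than in the text.
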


By global branch, one means that there is a continuum of solutions starting at
the equilibrium, where the continuum goes to infinity in the norm of the
solution or in the period , or goes to collision, or otherwise goes to other
relative equilibria in such a way that the sum of the jumps in the orthogonal
degrees is zero.

\subsubsection{A Morse potential}

One may easily prove that all equilibria for the satellite are planar.
Moreover, provided that the potential in the plane $V$ is a Morse function,
there are at least one global minimum and $n$ saddle points, see
\cite{GaIz10}. For example, in the classical restricted three body problem,
case $n=2$, there are two minimums where the satellite form an equilateral
triangle with the primaries, and three saddle points where the satellite is
collinear with the two primaries.

\begin{theorem}
Provided that the potential in the plane $V$ is a Morse function, each one of
the $n$ saddle points has one global bifurcation of planar periodic solutions,
and one global bifurcation of periodic eight solutions.
\end{theorem}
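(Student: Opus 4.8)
The plan is to obtain the statement as a direct consequence of the two global bifurcation theorems above, the only genuinely new input being the sign of the Hessian determinant of the planar potential at a saddle point. First I would record that every planar critical point $u_{0}$ of the restricted potential $V(u)=\left\vert u\right\vert ^{2}/2-\sum_{j=1}^{n}m_{j}\left\vert u-a_{j}\right\vert ^{-1}$ is automatically an equilibrium of (\ref{sat}) with full isotropy $\mathbb{Z}_{2}\times S^{1}$. Indeed $\partial_{z}V(u,z)=\sum_{j}m_{j}z\left\Vert (u,z)-(a_{j},0)\right\Vert ^{-3}$ and the mixed second derivatives $\partial_{z}\partial_{u}V$ both vanish identically on $\{z=0\}$, so the Hessian at $x_{0}=(u_{0},0)$ is block diagonal, $D^{2}V(x_{0})=\mathrm{diag}(H_{0},c)$, with $H_{0}$ the $2\times2$ planar Hessian and $c=\sum_{j}m_{j}\left\vert u_{0}-a_{j}\right\vert ^{-3}>0$. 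Under the Morse hypothesis, \cite{GaIz10} supplies at least one minimum and $n$ non-degenerate saddle points of $V$ on the plane, and each of them is such an equilibrium.

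Next I would fix one of those saddle points $x_{0}$ and examine $H_{0}$, with trace $T$ and determinant $D$. Since $x_{0}$ is a saddle of a Morse function of two variables, $H_{0}$ has one positive and one negative eigenvalue, hence $D=\det H_{0}<0$; the hypothesis $D<0$ of the first theorem is therefore met, and it delivers exactly one global bifurcation of planar periodic solutions from $x_{0}$. In degree-theoretic terms, the reduced block on $V_{0}$ is the Hermitian matrix $M(\nu)|_{V_{0}}=\nu^{2}I-2i\nu J+H_{0}$, whose determinant equals $D<0$ at $\nu=0$ and grows like $\nu^{4}$ as $\nu\to\infty$; it must change sign, so the Morse index of $M(\nu)|_{V_{0}}$ jumps, i.e. $\eta_{\mathbb{Z}_{2}}(\nu_{0})\neq0$ at some crossing frequency $\nu_{0}$, and the existence property of $\deg_{\perp}$ produces a branch of zeros inside $V^{\mathbb{Z}_{2}}$, that is, of planar solutions.

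For the eight solutions I would simply invoke the second theorem, which applies to \emph{every} equilibrium and hence to each of the $n$ saddle points, producing a global bifurcation of periodic solutions with $u(t)=u(t+\pi)$ and $z(t)=-z(t+\pi)$; the underlying mechanism is the elliptic vertical normal mode of frequency $\sqrt{c}$ at $x_{0}$, and only the refinement that the local branch be truly spatial requires a nonresonance condition, which is not asked for here. Since the planar branch lies in $V^{\mathbb{Z}_{2}}$ and the eight branch in $V^{\mathbb{\tilde{Z}}_{2}}$, the two continua are carried by the distinct generators $[F_{\mathbb{Z}_{2}}]$ and $[F_{\mathbb{\tilde{Z}}_{2}}]$ of $\Pi_{\perp}$ and are genuinely different; assembling these facts over the $n$ saddle points proves the theorem.

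With the two preceding theorems available there is essentially no remaining obstacle, and this is really a specialization rather than a new argument. The one point that needs care — and where the genuine work of \cite{GaIz10} resides — is checking that the index jumps $\eta_{\mathbb{Z}_{2}}(\nu_{0})$ and $\eta_{\mathbb{\tilde{Z}}_{2}}(\nu_{0})$ for the blocks over $V_{0}$ and $V_{1}$ are actually nonzero and are not cancelled by a resonance at a harmonic of the principal frequency. Once that spectral analysis of $M(l\nu)$ is in hand, the case $D<0$ of a saddle point is immediate.
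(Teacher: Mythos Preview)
Your proposal is correct and follows exactly the route the paper intends: the theorem is obtained as a direct specialization of the two preceding bifurcation theorems, the only new observation being that a non-degenerate saddle of the planar potential has $D=\det H_{0}<0$, so the case $D<0$ of the first theorem yields the single planar branch, while the second theorem yields the eight-branch at every equilibrium. Your additional remarks on the block-diagonal form of $D^{2}V(x_{0})$ and the positivity of the vertical eigenvalue $c$ are accurate and even a bit more explicit than the paper itself, which simply invokes \cite{GaIz10} for the underlying spectral analysis.
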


\begin{theorem}
The minimum point satisfy one of the following options:\ (a) it has two global
bifurcations of planar periodic solutions and one bifurcation of periodic
eight solutions, or (b) it has only one bifurcation of spatial periodic eight solutions.
\end{theorem}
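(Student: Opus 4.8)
The plan is to read off this dichotomy from the two bifurcation theorems above, once one has computed the second-order data of the potential $V$ at the equilibrium. First I would record the two relevant pieces of information at a nondegenerate minimum $x_{0}$ of the planar Morse function $V$: the in-plane Hessian $H=D^{2}V(x_{0})$ is positive definite, so $T=\operatorname{tr}H>0$ and $D=\det H>0$; and a direct differentiation of $V(u,z)$ gives, at any planar equilibrium, $V_{zz}(x_{0})=-\sum_{j}m_{j}\,|u_{0}-a_{j}|^{-3}<0$. Consequently the vertical block $M(\nu)|_{V_{1}}=\nu^{2}+V_{zz}(x_{0})$ changes sign, hence changes Morse index, as $\nu$ crosses $\nu_{z}=\sqrt{-V_{zz}(x_{0})}$, so $\eta_{\mathbb{\tilde{Z}}_{2}}(\nu_{z})\neq0$ and, by the theorem on periodic eight solutions, the minimum always carries a global bifurcation of periodic eight solutions. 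This accounts for the ``one eight bifurcation'' present in both options.

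Next I would analyse the planar block. From $M(\nu)=\nu^{2}I-2i\nu\operatorname{diag}(J,0)+D^{2}V(x_{0})$ one computes $\det\big(M(\nu)|_{V_{0}}\big)=\nu^{4}+(T-4)\nu^{2}+D$, a quadratic in $s=\nu^{2}$ whose product of roots is $D>0$. Two exclusive cases arise. In case (a), $T<4$ and $D<(2-T/2)^{2}$: the quadratic then has two distinct positive roots $\nu_{1}^{2}<\nu_{2}^{2}$, the Morse index of $M(\nu)|_{V_{0}}$ jumps across each, so $\eta_{\mathbb{Z}_{2}}(\nu_{1})$ and $\eta_{\mathbb{Z}_{2}}(\nu_{2})$ are nonzero, and the theorem on $T$ and $D$ gives two global bifurcations of planar periodic solutions; together with the eight branch above this is option (a). In case (b), the complement: since $D>0$ the quadratic has no positive real root, so $M(\nu)|_{V_{0}}$ is invertible for all $\nu\geq0$ and, being positive definite at $\nu=0$, has Morse index identically $0$; hence $\eta_{\mathbb{Z}_{2}}\equiv0$ and there is no planar bifurcation, only the eight branch.

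It remains to verify that the eight branch in case (b) is genuinely spatial. Here I would appeal to the theorem on eight solutions, whose conclusion $z(t)\not\equiv0$ near $x_{0}$ holds once a nonresonance condition between the vertical frequency $\nu_{z}$ and the in-plane linearized frequencies $\nu_{1},\nu_{2}$ is met. But in case (b) the quadratic $s^{2}+(T-4)s+D$ has no positive root, i.e.\ there are no real in-plane frequencies at all, so the nonresonance condition is vacuously satisfied; therefore $z(t)\not\equiv0$ and the bifurcation is of spatial eight solutions, which is option (b). Globality in each instance is inherited from the nonvanishing of the corresponding orthogonal degree, exactly as in the paragraph following the bifurcation theorem.

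The step I expect to be the main obstacle is not the elementary case split but pinning down the nonresonance hypothesis of the eight-solutions theorem and confirming that ``no real in-plane frequency'' implies it: concretely, one must check that the degree contribution at $\nu_{z}$ in $\Pi_{\perp}$ is carried by the generator $[F_{\mathbb{\tilde{Z}}_{2}}]$ and is not altered by a coincident planar degeneracy --- which is automatic in case (b) precisely because $M(\nu)|_{V_{0}}$ never degenerates there --- together with the accounting of which $\mathbb{Z}$-summands of $\Pi_{\perp}$ receive nonzero coefficients, and the (generic) simplicity of the crossings $\nu_{1},\nu_{2}$ in case (a) that keeps one in the non-resonant setting where the reduction to the first Fourier mode is valid.
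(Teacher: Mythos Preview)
Your proposal is correct and is exactly the derivation the paper intends: the statement is a corollary of the two preceding theorems on $T,D$ and on eight solutions, and you apply them just as the paper (implicitly) does, including the observation that in case~(b) the absence of real in-plane frequencies makes the nonresonance hypothesis vacuous so that the eight branch is genuinely spatial. The only point worth tightening is the boundary case $D=(2-T/2)^{2}$ with $T<4$, where the quadratic in $s=\nu^{2}$ has a double positive root; this is excluded from both alternatives and should be set aside as a non-generic degeneracy, which the paper also leaves implicit.
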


\subsubsection{The Maxwell's Saturn ring}

One may apply these results when the primaries form the Maxwell's Saturn ring,
see { Proposition }\ref{ring}. This is a classical model for Saturn and one
ring around it. In this case one has the following theorem.

\begin{theorem}
The potential has two $\mathbb{Z}_{n}$-orbits of saddle points (r1) and (r2),
when $n\geq2$, and one more $\mathbb{Z}_{n}$-orbit of saddle points when
$n\geq3$ and $\mu$ is near from zero. Furthermore, each saddle point has one
global bifurcation of planar periodic solutions and one global bifurcation of
periodic eight-solutions.
\end{theorem}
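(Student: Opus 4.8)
The plan is to reduce the theorem to two separate tasks: (i) identifying the $\mathbb{Z}_n$-orbits of saddle points of the planar potential $V$ on the configuration space of the satellite when the primaries sit at the Maxwell relative equilibrium (one central mass $\mu$ and $n$ equal masses on a regular polygon), and (ii) verifying, at each such saddle, that the Morse-index jumps $\eta_{\mathbb{Z}_2}(\nu_0)$ and $\eta_{\tilde{\mathbb{Z}}_2}(\nu_0)$ from the bifurcation theorem above are nonzero. Step (ii) is then immediate from Theorem 0.3 (the ``$n$ saddle points'' consequence of the Morse hypothesis): each saddle of $V$ contributes $D<0$, so by the planar bifurcation theorem there is one global branch of planar periodic solutions, and by the eight-solution theorem there is one global branch of eight solutions. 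So the real content is step (i), the critical-point analysis of $V$.

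For step (i), I would exploit the $\mathbb{Z}_n$ symmetry coming from rotation by $2\pi/n$ that permutes the primaries cyclically, together with the reflection symmetry fixing each ``spoke'' of the polygon. Restricting $V$ to a fundamental domain — the closed angular sector of width $2\pi/n$ containing one primary and bounded by two reflection axes — the critical points in the interior come in full $\mathbb{Z}_n$-orbits, while critical points on the reflection axes are the natural candidates for the special orbits (r1), (r2) (and the extra one for $n\ge 3$, $\mu$ small). Along a reflection axis the problem is effectively one-dimensional in the radial variable, and one studies the radial profile of $V$ restricted to that line: the Newtonian wells at the primaries, the centrifugal term $|u|^2/2$, and the contributions of the off-axis primaries. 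Counting sign changes of the radial derivative between consecutive singularities yields the critical points on the axis; computing the Hessian there (which block-diagonalizes into the radial direction and the transverse direction by the reflection symmetry) identifies which are saddles. The appearance of a \emph{third} orbit of saddles only for $n\ge 3$ and $\mu$ near zero should come from a bifurcation in this radial count: for $\mu=0$ the central mass is absent and the geometry near the center degenerates, producing an extra critical point that persists for small $\mu>0$ but collides/disappears as $\mu$ grows — a transversality/implicit-function argument in the parameter $\mu$.

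The main obstacle I expect is the explicit critical-point count in step (i): showing there are exactly two axis-orbits of saddles for all $n\ge 2$ and controlling the third one as $\mu\to 0^+$ requires estimates on the transcendental function $V$ restricted to the symmetry lines, uniformly in $n$ and $\mu$. One has to rule out additional interior critical points (or additional axis critical points) and confirm the Morse nondegeneracy, which feeds back into the hypothesis of Theorem 0.3. I would handle this by combining (a) convexity/monotonicity of the radial profile away from the singularities, forcing a bounded number of zeros of $V'$ on each axis, (b) a perturbation argument from the Kepler case or the $n$-gon-without-center case where the count is classical, and (c) the degree-theoretic consistency check that the alternating sum of Morse indices over all critical points matches the Euler characteristic of the collision-free sector. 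Once the saddles are located and their Morse type confirmed, the bifurcation conclusions are a direct application of the orthogonal-degree machinery already set up, with $\eta_{\mathbb{Z}_2}$ and $\eta_{\tilde{\mathbb{Z}}_2}$ computed from the signs of $T$ and $D$ as in the first bifurcation theorem.
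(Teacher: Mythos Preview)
Your two-step decomposition matches exactly how the paper organizes the argument: the bifurcation conclusions are obtained as a direct corollary of the preceding Morse-saddle theorem (every saddle of $V$ has $D<0$, hence one planar branch and one eight branch), while the identification of the $\mathbb{Z}_{n}$-orbits of saddles is the separate analytic input. The paper, being a survey, does not carry out step~(i) in the text; it refers the reader to \cite{GaIz10} for the critical-point analysis and to \cite{ArEl04} for the existence of the extra orbit when $\mu$ is small. The geometry you propose---restricting $V$ to the reflection axes of the $n$-gon and doing a one-dimensional radial analysis between singularities---is precisely the picture displayed in the paper's Figure~2, where $(r_{1})$ lies on the ray through a primary beyond the polygon, $(r_{2})$ lies between the center and a primary (or on a bisector when $\mu=0$), and the additional saddle for small $\mu$ arises from the degeneration at the origin. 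So your plan is consistent with the paper's approach; the only thing to flag is that your proposal is explicitly a \emph{plan}, and the hard estimates you anticipate in (a)--(c) are exactly what is carried out in \cite{GaIz10}, not in the present paper.
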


\begin{theorem}
The potential has one $\mathbb{Z}_{n}$-orbit of minimum points (r3) for
$n\geq2$. Moreover, provided $\mu$ is big enough, each minimum point has two
global bifurcations of planar periodic solutions, and one global bifurcation
of periodic eight solutions. On the other hand, if $\ $ $\mu$ is small and
$n\geq3$, there is another $\mathbb{Z}_{n}$-orbit of minimum points with only
one bifurcation of spatial periodic eight-solutions.
\end{theorem}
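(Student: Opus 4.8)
The plan is to reduce the statement to the planar and spatial bifurcation theorems above, applied to the planar amended potential $V(u)=|u|^2/2+\sum_j m_j/|u-a_j|$ (augmented by $\mu/|u|$ for the central mass), and then to push through two Hessian computations, one as $\mu\to\infty$ and one as $\mu\to0$. First, since $V$ is coercive on the collision-free plane — it tends to $+\infty$ at infinity, at each vertex $a_j$, and, for $\mu>0$, at the centre — it has a global minimum; as $V$ is $\mathbb{Z}_n$-equivariant and this minimum lies on a symmetry axis, its orbit consists of $n$ points, and this is $(r3)$, present for every $n\ge2$. To produce the extra minimum orbit for small $\mu$ I would track the origin: for the pure polygon potential $V_0(u)=|u|^2/2+\sum_j m_j/|u-a_j|$ one computes $D^2V_0(0)=(1+nm/2R^3)I$ when $n\ge3$, so the origin is then a non-degenerate minimum of $V_0$ (whereas for $n=2$ it is a saddle). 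Switching on a small $\mu>0$ turns the singular point $u=0$ into a near-circle of critical points of the rotationally symmetric truncation $\mu/|u|+c|u|^2/2$, $c:=1+nm/2R^3$, at radius $r_\ast=(\mu/c)^{1/3}$, and the leading $\mathbb{Z}_n$-breaking harmonic (of order $|u|^n$) splits this circle into a $\mathbb{Z}_n$-orbit of minima — the new minimum orbit — and a $\mathbb{Z}_n$-orbit of saddles — the extra saddle orbit of the previous theorem. This is consistent with the saddle/minimum orbit count via the Euler characteristic of the punctured plane together with the strict subharmonicity $\Delta V=2+\sum_j m_j|u-a_j|^{-3}+\mu|u|^{-3}>0$ (no interior maxima).

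Second, at a critical point $u_0$ I would diagonalize the planar Hessian, put $T=\operatorname{tr}D^2V(u_0)$ and $D=\det D^2V(u_0)$, and observe that $T=\Delta V(u_0)=2+\beta$ with $\beta:=\sum_j m_j|u_0-a_j|^{-3}+\mu|u_0|^{-3}>0$. On the $1$-st Fourier mode the reduced linearization splits along $V_0\oplus V_1$, because the $u$–$z$ entries of $D^2V(u_0)$ vanish at a planar equilibrium. On $V_0$ the block $\nu^2I-2i\nu J+D^2V(u_0)$ has determinant $\nu^4+(T-4)\nu^2+D$, so planar crossings occur at $\nu^2=(2-T/2)\pm\sqrt{(2-T/2)^2-D}$; at a minimum $D>0$, and these are two distinct positive values — hence two planar global bifurcations by the planar bifurcation theorem above — exactly when $D<(2-T/2)^2$ and $T<4$, whereas if $T>4$ the two roots have negative sum and positive product, are both negative, and no planar bifurcation occurs. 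On $V_1$ the block is the scalar $\nu^2-\beta$, whose Morse index always jumps at $\nu_0=\sqrt\beta$, so $\eta_{\tilde{\mathbb Z}_2}(\nu_0)\ne0$ and the eight-solution branch of the spatial-symmetry theorem above is always present; when $T>4$ there is no planar branch at $x_0$, so the local eight-branch is genuinely spatial with no further hypothesis, which is precisely option (b). Thus the theorem reduces to two inequalities: $0<D<(2-T/2)^2$ with $T<4$ at $(r3)$ for $\mu$ large, and $T>4$ at the new orbit for $\mu$ small.

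Third, I would carry out the two asymptotics. As $\mu\to\infty$, $(r3)$ lies near the Keplerian circle $|u_0|=\mu^{1/3}$, on which $D^2(|u|^2/2+\mu/|u|)$ has eigenvalues $3$ and $0$; the polygonal term, of relative size $O(\mu^{-1})$, lifts the null eigenvalue to a small $\varepsilon(\mu)>0$ at the minima of the broken circle, so $T\to3$ and $0<D\sim3\varepsilon(\mu)<(2-T/2)^2\to 1/4$: option (a), two planar bifurcations and one eight. As $\mu\to0$, at the new orbit $|u_0|=r_\ast\to0$, the radial Hessian eigenvalue tends to $3c$ and the tangential one to $0^+$, so $T\to3c=3+3nm/2R^3$; inserting the Maxwell relative-equilibrium relation $R^3=\mu+ms_n$ of Proposition~\ref{ring} (with $s_n=\frac14\sum_{k=1}^{n-1}\csc(k\pi/n)$) this becomes $3+3n(1-\mu/R^3)/(2s_n)>4$, so by the second paragraph there is no planar bifurcation and only the — now automatically spatial — eight-branch survives: option (b). In both cases the local branches become global continua by the existence and $\Gamma$-orthogonal homotopy invariance of $\deg_\perp$: a nonzero $\eta_{\mathbb Z_2}(\nu_0)$ or $\eta_{\tilde{\mathbb Z}_2}(\nu_0)$ makes the corresponding integer $d_H$ nonzero on a large invariant region, which forces the connected solution set through $(u_0,\nu_0)$ to be unbounded in norm or in period, to reach a collision, or to return to other relative equilibria with cancelling orthogonal-degree jumps.

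The main obstacle is the quantitative control of the lattice sums along the symmetry axes. For $(r3)$ with $\mu$ large one must establish $0<D<(2-T/2)^2$ and $T<4$ uniformly, which needs the perturbative description of the minima of the broken Keplerian circle and, in particular, the sign of the lifted eigenvalue $\varepsilon(\mu)$. For the new orbit at small $\mu$ one must both locate it — separating it from the extra saddle by the sign of the $\mathbb{Z}_n$-breaking harmonic — and prove $\operatorname{tr}D^2V(u_0)>4$, which comes down to the comparison of $s_n$ with $3n/2$ and to the estimate $\sum_j m_j|u_0-a_j|^{-3}\to nm/R^3$. The bounds on $\sum_j m_j|u_0-a_j|^{-3}$ and on the tensor $\sum_j m_j(u_0-a_j)(u_0-a_j)^{T}|u_0-a_j|^{-5}$ needed for all this are exactly those worked out in \cite{GaIz13}; granted them, substituting $T$ and $D$ into the planar and spatial bifurcation theorems yields both assertions.
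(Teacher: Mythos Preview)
The paper is a survey and does not prove this theorem in the text; it states it as an application of the trace/determinant criteria (Theorems~5--8) to the Maxwell ring, deferring the Hessian estimates to \cite{GaIz10}. Your strategy is precisely that intended route: reduce to the dichotomy ``$0<D<(2-T/2)^2$ with $T<4$'' versus ``$T>4$'' at a minimum, and then compute $T,D$ asymptotically at $(r3)$ as $\mu\to\infty$ (where $T\to 3$, $D\to 0^+$, giving option~(a)) and at the near-central orbit as $\mu\to 0$ (where $T\to 3(1+n/2)>4$ for $n\ge 3$, giving option~(b)). Your derivation of the extra orbit from the nondegenerate minimum of $V_0$ at the origin for $n\ge 3$, split by the $\mathbb{Z}_n$-breaking harmonic after switching on $\mu$, is also the mechanism behind the ``two extra orbits'' remark following the theorem and in \cite{ArEl04}.

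One correction: the insertion of the relation $R^3=\mu+ms_n$ from Proposition~\ref{ring} is misplaced. That identity fixes the rotation frequency $\omega$ of the \emph{primaries}; in the satellite problem the primaries are prescribed data and $s_n$ plays no role in the Hessian of $V$ at a satellite equilibrium. Your own direct computation $T=2+\beta\to 2+c+n=3+3n/2$ already gives $T>4$ for every $n\ge 2$, so the detour through $s_n$ should simply be deleted. A minor addendum: Theorem~5 as stated gives two planar branches under $0<D<(2-T/2)^2$, but the positivity of the two roots of $\nu^4+(T-4)\nu^2+D$ also requires $T<4$; you use this for option~(b) and should record it explicitly for option~(a) (at $(r3)$ with $\mu$ large you have $T\to 3<4$, so it holds). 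With these two adjustments your outline coincides with the paper's implicit argument.
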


\begin{figure}[h]
\centering
\par
\subfloat[$\mu\in(0,\infty)$] {\
\begin{pspicture}(-2,-2)(2,2)\SpecialCoor
\psarc{->}(0,0){2}{25}{45}
\pscircle[linestyle=dashed](0,0){1}
\psdots(0,0)(1;-120)(1,0)(1;120)
\psdots[dotstyle=otimes](1.5;-120)(1.5,0)(1.5;120)
\psdots[dotstyle=o](1.5;60)(1.5;180)(1.5;-60)
\psdots[dotstyle=otimes](.5;-120)(.5,0)(.5;120)
\rput[bl](1.5,0){\small $r_{1}$}
\rput[bl](.5,0){\small $r_{2}$}
\rput[bl](1.5;60){\small $r_{3}$}
\NormalCoor\end{pspicture}
} \qquad\subfloat[$\mu=0$] {\
\begin{pspicture}(-2,-2)(2,2)\SpecialCoor
\psarc{->}(0,0){2}{25}{45}
\pscircle[linestyle=dashed](0,0){1}
\psdots(1;-120)(1,0)(1;120)
\psdots[dotstyle=otimes](1.5;-120)(1.5,0)(1.5;120)
\psdots[dotstyle=o](0,0)(1.5;60)(1.5;180)(1.5;-60)
\psdots[dotstyle=otimes](.7;60)(.7;180)(.7;-60)
\rput[bl](1.5,0){\small $r_{1}$}
\rput[l](.5;60){\small $r_{2}$}
\rput[bl](1.5;60){\small $r_{3}$}
\NormalCoor\end{pspicture}
}\caption{Example for $n=3$.}%
\end{figure}
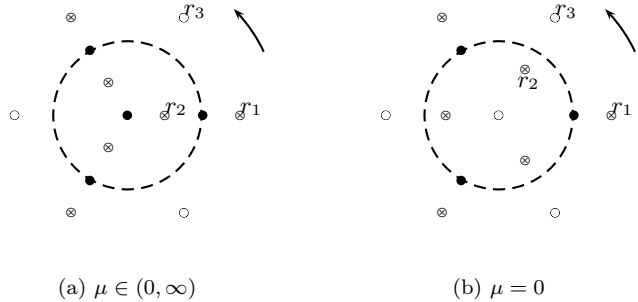

\begin{remark}
The equilibria of the $\mathbb{Z}_{n}$-orbit of minimum points (r3) are
linearly stable if $\mu$ is big enough. This is proven in the paper
\cite{BaEl04}. The existence of the two extra orbits of equilibria for $\mu$
small was pointed out in the paper \cite{ArEl04}. The stability and this fact
is proven also in the paper \cite{GaIz10}. The orthogonal degree has been used
to prove bifurcation in the restricted three body problem also in the paper
\cite{MR04}.
\end{remark}

\begin{remark}
The degree arguments, coupled with group representation ideas, give global
information, i.e., an indication of where the bifurcation branches could go.
Also, since the results are valid for problems which are deformation of the
original problem, the method does not require high order computations and they
may be applied in some degenerate cases (for instance it is not necessary that
the bifurcation parameter crosses a critical value with non-zero speed; it {
is} enough that it crosses {it } eventually). An immediate drawback of this
approach is that topological methods do not provide a detailed information on
the local behavior of the bifurcating branch, such as stability or the
existence of other type of solutions, like KAM tori. Other methods, such as
normal forms or special coordinates, should be used for these purposes but
they only provide local information near the critical point. In a similar way,
the degree arguments give only partial results on resonances and other tools
should be used.
\end{remark}

\section{The $n$-body problem}

Let $q_{j}(t)\in\mathbb{R}^{3}$ be the position of the $j$-th body with mass
$m_{j}$, for $j\in\{0,1,...,n\}$. Let $J$ be the standard symplectic matrix in
$\mathbb{R}^{2}$. Newton's equations of the $n$ bodies, in rotating
coordinates $q_{j}(t)=(e^{\sqrt{\omega}tJ}u_{j}(t),z_{j}(t))$, are%
\begin{align}
m_{j}\ddot{u}_{j}+2m_{j}\sqrt{\omega}J\dot{u}_{j}  &  =\omega m_{j}u_{j}%
-\sum_{i=0(i\neq j)}^{n}m_{i}m_{j}\frac{u_{j}-u_{i}}{\left\Vert (u_{j}%
,z_{j})-(u_{i},z_{i})\right\Vert ^{3}}\label{nbod}\\
m_{j}\ddot{z}_{j}  &  =-\sum_{i=0(i\neq j)}^{n}m_{i}m_{j}\frac{z_{j}-z_{i}%
}{\left\Vert (u_{j},z_{j})-(u_{i},z_{i})\right\Vert ^{3}}\text{.}\nonumber
\end{align}

Relative equilibria of the $n$-body problem correspond to equilibria in these
rotating coordinates. Since all relative equilibria are planar, the positions
$(a_{j},0)$ correspond to a relative equilibrium if they satisfy the relations%
\begin{equation}
\omega a_{j}=\sum_{i=0(i\neq j)}^{n}m_{i}\frac{a_{j}-a_{i}}{\left\Vert
a_{j}-a_{i}\right\Vert ^{3}}\text{.} \label{re}%
\end{equation}

\begin{remark}
Actually, identifying the plane and the complex plane, solutions of (\ref{re})
may give also homographic solutions of the form $q_{j}=qa_{j}$, where the
function $q(t)\in\mathbb{C}$ satisfy the Kepler equation. In these general
solutions, the bodies may move in ellipses, parabolas or hyperbolas, instead
of circular orbits. One may have also solutions with total collapse or growing
like $q(t)=(9\omega/2)^{1/3}t^{2/3}$.
\end{remark}

\begin{proposition}
\label{ring}Set the position of the bodies as: $a_{0}=0$ with mas $\mu$, and
$a_{j}=e^{ij\zeta}$ with mass $1$ for $j\in\{1,...,n\}$, where $\zeta=2\pi/n$.
The $a_{j}$'s correspond to a relative equilibrium when $\omega=\mu+s_{1}$,
where%
\[
s_{1}=\sum_{j=1}^{n-1}\frac{1-e^{ij\zeta}}{\left\Vert 1-e^{ij\zeta}\right\Vert
^{3}}=\frac{1}{4}\sum_{j=1}^{n-1}\frac{1}{\sin(j\zeta/2)}.
\]

\end{proposition}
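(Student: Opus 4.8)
The plan is to substitute the proposed configuration directly into the relative equilibrium equations (\ref{re}) and verify that they hold exactly when $\omega=\mu+s_1$. There are two kinds of equations: the one for the central body $a_0$, and the $n$ equations for the ring vertices $a_1,\dots,a_n$.

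First I would dispose of the central body. With $a_0=0$ the left-hand side of (\ref{re}) vanishes, while the right-hand side equals $-\sum_{j=1}^{n}\frac{a_j}{\|a_j\|^{3}}=-\sum_{j=1}^{n}e^{ij\zeta}$, which is zero because $e^{i\zeta},\dots,e^{in\zeta}$ run over the $n$-th roots of unity. Hence the equation at $a_0$ holds automatically, for every $\omega$, and imposes no constraint.

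Next I would treat a vertex $a_k=e^{ik\zeta}$, $k\in\{1,\dots,n\}$. Since $\|a_k\|=1$, the attraction of the centre contributes $\mu\,e^{ik\zeta}$ to the right-hand side of (\ref{re}). In each of the remaining terms I factor out $e^{ik\zeta}$, using $a_k-a_l=e^{ik\zeta}(1-e^{i(l-k)\zeta})$ and $\|a_k-a_l\|=\|1-e^{i(l-k)\zeta}\|$ for $l\in\{1,\dots,n\}\setminus\{k\}$; the vertex--vertex part of the sum then becomes $e^{ik\zeta}\sum_{l\neq k}\frac{1-e^{i(l-k)\zeta}}{\|1-e^{i(l-k)\zeta}\|^{3}}$, and reindexing $l-k$ modulo $n$ (legitimate since $\zeta=2\pi/n$, so $e^{i(l-k)\zeta}$ depends only on $l-k\bmod n$, and $l-k$ runs over all nonzero residues) turns this into $e^{ik\zeta}s_1$. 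Thus, by the $\mathbb{Z}_n$-symmetry of the ring, all $n$ vertex equations collapse to the single scalar identity $\omega=\mu+s_1$, which is precisely the asserted condition.

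It then remains to evaluate $s_1$ in closed form. I would use the identities $1-e^{i\theta}=-2i\sin(\theta/2)\,e^{i\theta/2}$ and $\|1-e^{i\theta}\|=2|\sin(\theta/2)|$; since $j\zeta/2=j\pi/n\in(0,\pi)$ for $j\in\{1,\dots,n-1\}$, the sine is positive, and a short computation reduces the $j$-th summand of $s_1$ to $\frac{1}{4\sin(j\zeta/2)}-i\,\frac{\cos(j\zeta/2)}{4\sin^{2}(j\zeta/2)}$. Summing the real parts immediately yields the claimed value $\tfrac14\sum_{j=1}^{n-1}1/\sin(j\zeta/2)$. The only step that is not mere bookkeeping --- and what I expect to be the main obstacle --- is to check that the imaginary part vanishes, i.e. that the tangential forces on a vertex cancel. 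I would get this by pairing the index $j$ with $n-j$: since $n\zeta/2=\pi$ we have $\sin((n-j)\zeta/2)=\sin(j\zeta/2)$ and $\cos((n-j)\zeta/2)=-\cos(j\zeta/2)$, so the imaginary contributions of $j$ and $n-j$ are opposite and cancel, while for $n$ even the self-paired term $j=n/2$ contributes $\cos(\pi/2)=0$. Therefore $s_1$ is real and equal to the stated sum, which completes the verification of (\ref{re}) and proves the proposition.
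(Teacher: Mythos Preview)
Your proof is correct and follows the same approach as the paper: verify equation~(\ref{re}) at the centre using $\sum_{j=1}^{n}e^{ij\zeta}=0$, and at a vertex by factoring out $e^{ik\zeta}$ and invoking the $\mathbb{Z}_n$-symmetry to reduce all vertex equations to $\omega=\mu+s_1$. The paper's proof stops there and does not justify the second equality $s_1=\tfrac14\sum_{j=1}^{n-1}1/\sin(j\zeta/2)$; your additional computation via $1-e^{i\theta}=-2i\sin(\theta/2)e^{i\theta/2}$ and the $j\leftrightarrow n-j$ pairing to kill the imaginary part is a welcome completion of that step.
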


\begin{proof}
For $j=0$ the equality is $\omega a_{0}-\mu\sum_{j=0}^{n-1}e^{ij\zeta}=0$. For
$j\neq0$ the equality is%
\[
\sum_{i=1~(i\neq j)}^{n}\frac{a_{j}-a_{i}}{\left\Vert a_{j}-a_{i}\right\Vert
^{3}}+\mu a_{j}=(\mu+s_{1})a_{j}=\omega a_{j},
\]

\end{proof}

Therefore, the $a_{j}$'s form a relative equilibrium for the frequency
$\omega=\mu+s_{1}$. This relative equilibrium was studied by Maxwell as a
simplified model of Saturn and its rings.

In the paper \cite{GaIz11}, Proposition 23, one finds that for each
$k\in\{1,...,n-1\}$, there is one mass $\mu_{k}$ with one global bifurcation
of relative equilibria. Let $h$ be the maximum common divisor of $k$ and $n$,
the bifurcation branch from $\mu_{k}$ has solutions where $n$ bodies are
arranged as $n/h$ regular polygons of $h$ sides. See the example for $n=6$.

\begin{figure}[h]
\centering\subfloat[
Symmetries for $k=1$.] {
\begin{pspicture}(-2,-2)(2,2)\SpecialCoor
\psline{->}(-2,0)(2,0)
\pspolygon[showpoints=true,linestyle=dashed](1,0)(1.5;70)(1;140)(-2,0)(1;-140)(1.5;-70)
\psdots(.5,0)
\NormalCoor\end{pspicture}
} \subfloat[
Symmetries for  $k=2$.] {
\begin{pspicture}(-2,-2)(2,2)\SpecialCoor
\psline{->}(-2,0)(2,0)
\pspolygon[showpoints=true,linestyle=dashed](2,0)(1.3;30)(1.3;150)(-2,0)(1.3;-150)(1.3;-30)
\psdots(0,0)
\NormalCoor\end{pspicture}
} \subfloat[
Symmetries for $k=3$.] {
\begin{pspicture}(-2,-2)(2,2)\SpecialCoor
\psline{->}(-2,0)(2,0)
\pspolygon[showpoints=true,linestyle=dashed](2;0)(.5;60)(2;120)(.5;180)(2;240)(.5;300)
\psdots(0,0)
\NormalCoor\end{pspicture}
}\caption{ $n=6$.}%
\end{figure}

\begin{remark}
The $n$-body problem has been the object of many papers, with different
techniques and different purposes. For the stability of the polygonal
equilibrium, or the bifurcation of relative equilibria from it, one shall
mention: \cite{VaKo07}, \cite{Ro00}, \cite{MeSc88}, \cite{Sc03}, among others.
\end{remark}

\subsection{The orthogonal bifurcation map}

Changing variables from $t$ to $t/\nu$, the $2\pi/\nu$-periodic solutions of
equation (\ref{nbod}) are zeros of the bifurcation map $f$ defined in the
spaces%
\[
f:H_{2\pi}^{2}(\mathbb{R}^{3(n+1)}\backslash\Psi)\times\mathbb{R}%
^{+}\rightarrow L_{2\pi}^{2}\text{,}%
\]
where $\Psi=\{x\in\mathbb{R}^{3(n+1)}:x_{i}=x_{j}\}$ is the collision set,
corresponding to two or more of the bodies colliding, and $H_{2\pi}%
^{2}(\mathbb{R}^{3(n+1)}\backslash\Psi)$ is the open subset, consisting of the
collision-free periodic (and continuous) functions of the Sobolev space
$H_{2\pi}^{2}(\mathbb{R}^{3(n+1)})$.

Define the action of $(\kappa,\theta)\in\mathbb{Z}_{2}\times SO(2)$ in
$\mathbb{R}^{3(n+1)}$ as
\begin{align*}
\rho(\kappa)(u_{j},z_{j})  &  =(u_{j},-z_{j})\text{, }\\
\rho(\theta)(u_{j},z_{j})  &  =(e^{-J\mathcal{\theta}}u_{j},z_{j})\text{,}%
\end{align*}
where the group $\mathbb{Z}_{2}$ reflects the $z$-axis, and where $SO(2)$
rotates the $(x,y)$-plane. Since { }Newton's equations are invariant by
isometries, the group $\mathbb{Z}_{2}\times SO(2)$ represent{s} the inherited
isometries in rotating coordinates and the map $f$ is $\mathbb{Z}_{2}\times
SO(2)$-equivariant .

Let $S_{n}$ be the group of permutations of the numbers $\{1,...,n\}$. Define
the action of an element $\gamma\in$ $S_{n}$ in $x\in\mathbb{R}^{3(n+1)}$ as
$\rho(\gamma)x_{0}=x_{0}$ for $j=0$, and for $j\in\{1,...,n\}$ as%
\[
\rho(\gamma)x_{j}=x_{\gamma(j)}\text{.}%
\]
Since the action of $S_{n}$ permutes the $n$ bodies with equal mass, then the
map $f$ is $S_{n}$-equivariant.

The map $f$ is $S^{1}$-equivariant with the action $\rho(\varphi
)x(t)=x(t+\varphi)$, because the equations are autonomous. As the orthogonal
degree is defined only for abelian groups, the map $f$ will be considered only
as $\Gamma\times S^{1}$-equivariant, where $\Gamma$ is the abelian group%
\[
\Gamma=\mathbb{Z}_{2}\times\mathbb{Z}_{n}\times SO(2),
\]
and $\mathbb{Z}_{n}$ is the subgroup of $S_{n}$ generated by $\zeta(j)=j+1$.

The element $\kappa\in\mathbb{Z}_{2}$ always leaves an equilibrium fixed
because all equilibria are planar, see \cite{Ga10} for a proof. Let
$\mathbb{\tilde{Z}}_{n}$ be the subgroup of $\Gamma$ generated by
\[
(\zeta,\zeta)\in\mathbb{Z}_{n}\times SO(2)\text{,}%
\]
where $\zeta=2\pi/n\in SO(2)$. The actions of $(\zeta,\zeta)$ send the point
$x_{0}$ to $e^{-J\zeta}x_{0}$, and it sends $x_{j}$ to $e^{-J\zeta}x_{j+1}$for
the other $j$'s . One may easily verify that the $a_{j}$'s are fixed by the
action of $(\zeta,\zeta)$, thus the isotropy group of $a$ is the group
${\Gamma}_{a}\times S^{1}$ with%
\[
\Gamma_{a}=\mathbb{Z}_{2}\times\mathbb{\tilde{Z}}_{n}.
\]

In each component, the infinitesimal generator of the action of $S^{1}$ is
given by $A_{0}x_{j}=\dot{x}_{j}$, and the infinitesimal generators of $SO(2)$
is given by
\[
A_{1}x_{j}=\frac{\partial}{\partial\theta}|_{\theta=0}(e^{-J\mathcal{\theta}%
}u_{j},z_{j})=diag(-J,0)x_{j}\text{.}%
\]
Thus, the equalities $\left\langle f(x),\dot{x}\right\rangle _{L_{2\pi}^{2}%
}=0$ and $\left\langle f(x),A_{1}x\right\rangle _{L_{2\pi}^{2}}=0$ follow as
the proof of conservation of energy and angular momentum for Newton's
equations, see \cite{GaIz13} for a proof. Thus the map $f$ is a $\Gamma\times
S^{1}$-orthogonal map.

\begin{remark}
The orbit of the polygonal equilibrium $a$ consists of all the rotations in
the $(x,y)$-plane. Since $f=0$ on the orbit $\Gamma a$, deriving the map $f$
along a parametrization of this orbit one gets that the generator $A_{1}a$ is
tangent to the orbit, and must be in the kernel of $f^{\prime}(a)$. This is a
well known fact where symmetries imply degeneracies.
\end{remark}

\subsection{Symmetries}

\subsubsection{Planar symmetries}

In the paper \cite{GaIz11}, it is proven that there are $n$ subspaces $W_{k}$
for the similar irreducible representations of $\mathbb{\tilde{Z}}_{n}$, where
the action of $\kappa\in\mathbb{Z}_{2}$ is $\rho(\kappa)=I$, and the action of
$(\zeta,\zeta)\in\mathbb{\tilde{Z}}_{n}\ $ is given by
\[
\rho(\zeta,\zeta,\varphi)=e^{ik\zeta}\text{.}%
\]

Moreover, since the action of $S^{1}$ on the fundamental Fourier mode is given
by
\[
\rho(\varphi)=e^{i\varphi}\text{,}%
\]
the isotropy subgroup of $\Gamma_{\bar{a}}\times S^{1}$ in the space $W_{k}$
is generated by $\kappa\in\mathbb{Z}_{2}$ and $\left(  \zeta,\zeta
,-k\zeta\right)  \in\mathbb{\tilde{Z}}_{n}\times S^{1}$. This is, the points
of $W_{k}$ are fixed by the group%

\[
\mathbb{\tilde{Z}}_{n}(k)\times\mathbb{Z}_{2}=\left\langle \left(  \zeta
,\zeta,-k\zeta\right)  \right\rangle \times\left\langle \kappa\right\rangle .
\]

As for the satellite, solutions with isotropy group $\mathbb{Z}_{2}$ must
satisfy $z_{j}(t)=0$, and solutions with isotropy group $\mathbb{\tilde{Z}%
}_{n}(k)$ satisfy the symmetries
\[
u_{j}(t)=\rho(\zeta,\zeta,-k\zeta)u_{j}(t)=e^{-i\zeta}u_{\zeta(j)}%
(t-k\zeta)\text{.}%
\]

In this case, for the central body one has the symmetry
\[
u_{0}(t)=e^{\ ij\zeta}u_{0}(t+jk\zeta).
\]
Using the notation $u_{j}=u_{j+kn}$ for $j\in\{1,...,n\}$, one has that
$\zeta(j)=j+1$, then the $n$ bodies with equal mass satisfy%
\[
u_{j+1}(t)=e^{\ ij\zeta}u_{1}(t+jk\zeta).
\]
Thus, each one of the $n$ bodies with equal mass follows the same planar
curve, but with different phase and with some rotation in the $(x,y)$-plane.

\begin{figure}[h]
\centering
\subfloat[Symmetries of $\mathbb{\tilde{Z}}_{n}(1)$.] {
\begin{pspicture}(-2.5,-2.5)(2.5,2.5)\SpecialCoor
\psdots[dotstyle=o](2;0)(2;72)(2;146)(2;219)(2;292)(0,0)
\psline[linestyle=dashed](.5;0)(.5;72)(.5;146)(.5;219)(.5;292)
\psline{*->}(.5;0)(.5;292)
\psellipticarc[linestyle=dashed](2;0)(.5,.3){0}{292}
\psellipticarc{*->}(2;0)(.5,.3){292}{0}
\rput{72}{
\psellipticarc[linestyle=dashed](2;0)(.5,.3){72}{0}
\psellipticarc{*->}(2;0)(.5,.3){0}{72}}
\rput{146}{
\psellipticarc[linestyle=dashed](2;0)(.5,.3){146}{72}
\psellipticarc{*->}(2;0)(.5,.3){72}{146}}
\rput{219}{
\psellipticarc[linestyle=dashed](2;0)(.5,.3){219}{146}
\psellipticarc{*->}(2;0)(.5,.3){146}{219}}
\rput{292}{
\psellipticarc[linestyle=dashed](2;0)(.5,.3){292}{219}
\psellipticarc{*->}(2;0)(.5,.3){219}{292}}
\NormalCoor\end{pspicture}
} \qquad{} \subfloat[Symmetries of $\mathbb{\tilde{Z}}_{n}(2)$.] {
\begin{pspicture}(-2.5,-2.5)(2.5,2.5)\SpecialCoor
\psdots[dotstyle=o](2;0)(2;72)(2;146)(2;219)(2;292)(0,0)
\psline{*->}(.5;0)(.5;146)
\psline(.5;146)(.5;292)(.5;72)(.5;219)(.5;0)
\psellipticarc[linestyle=dashed](2;0)(.5,.3){0}{292}
\psellipticarc{*->}(2;0)(.5,.3){292}{0}
\rput{72}{
\psellipticarc[linestyle=dashed](2;0)(.5,.3){146}{72}
\psellipticarc{*->}(2;0)(.5,.3){72}{146}}
\rput{146}{
\psellipticarc[linestyle=dashed](2;0)(.5,.3){292}{219}
\psellipticarc{*->}(2;0)(.5,.3){219}{292}}
\rput{219}{
\psellipticarc[linestyle=dashed](2;0)(.5,.3){72}{0}
\psellipticarc{*->}(2;0)(.5,.3){0}{72}}
\rput{292}{
\psellipticarc[linestyle=dashed](2;0)(.5,.3){219}{146}
\psellipticarc{*->}(2;0)(.5,.3){146}{219}}
\NormalCoor\end{pspicture}
}\caption{For $n=5$.}%
\end{figure}

\begin{remark}
In fixed coordinates, the solutions are $q_{j}(t)=e^{i\sqrt{\mathcal{\omega}%
}t}u_{j}(\nu t)$. Thus in fixed coordinates the solutions are in general
{q}uasiperiodic solutions. In particular, when the central body has mass zero,
we are considering the $n$-body problem with equal masses. In this case, one
has for $j\in\{1,...,n\}$ that%
\[
q_{j+1}(t)=e^{\ ij\zeta\Omega}q_{1}(t+jk\zeta)
\]
with $\Omega=1-k\sqrt{\omega}/\nu$. If $\Omega\in n\mathbb{Z}$, then solutions
with isotropy group $\mathbb{\tilde{Z}}_{n}(k)$ satisfy
\[
q_{j+1}(t)=q_{1}(t+jk\zeta).
\]
These solutions where all the bodies follow the same path are known as
choreographies, see \cite{CM00}.
\end{remark}

\subsubsection{Spatial symmetries}

In the paper \cite{GaIz13} it is proven that there are $n$ subspaces $W_{k}$
for the similar irreducible representations of $\mathbb{\tilde{Z}}_{n}$, where
the action of $\kappa\in\mathbb{Z}_{2}$ is given by $\rho(\kappa)=-I$, and the
action of the element $(\zeta,\zeta)\in\mathbb{\tilde{Z}}_{n}$ is%
\[
\rho(\zeta,\zeta,\varphi)=e^{ik\zeta}.
\]

Since the action of $S^{1}$ on the fundamental mode is $\rho(\varphi
)=e^{i\varphi}$, then the elements $(\zeta,\zeta,-k\zeta)\in\mathbb{\tilde{Z}%
}_{n}\times S^{1}$ and $(\kappa,\pi)\in\mathbb{Z}_{2}\times S^{1} $ act
trivially on $W_{k}$. Thus, the isotropy group of $W_{k}$ is generated by
$(\zeta,\zeta,-k\zeta)\ $and $(\kappa,\pi)$,%
\[
\mathbb{\tilde{Z}}_{n}(k)\times\mathbb{\tilde{Z}}_{2}=\left\langle \left(
\zeta,\zeta,-k\zeta\right)  \right\rangle \times\left\langle (\kappa
,\pi)\right\rangle .
\]

As we saw for the satellite, solutions with isotropy group $\mathbb{\tilde{Z}%
}_{2}$ satisfy%
\[
u_{j}(t)=u_{j}(t+\pi)\text{ and }z_{j}(t)=-z_{j}(t+\pi)\text{,}%
\]
thus the projection of this solution on the $(x,y)$-plane follows twice the
$\pi$-periodic curve $u(t)$, one time with the spatial coordinate $z(t)$ and a
second time with $-z(t)$. Thus solution looks like a spatial eight near the equilibrium.

Since the group $\mathbb{\tilde{Z}}_{n}(k)$ is generated by $(\zeta
,\zeta,-k\zeta)$, the solutions satisfy also the symmetries
\begin{align*}
u_{j}(t)  &  =e^{-i\zeta}u_{\zeta(j)}(t-k\zeta)\text{, }\\
z_{j}(t)  &  =z_{\zeta(j)}(t-k\zeta)\text{.}%
\end{align*}

\begin{remark}
To see one example, suppose that $n=2m$ and choose $k=m$. In this case the
central body remains at the center. Moreover, the $n$ bodies with equal masses
satisfy
\[
u_{j+1}(t)=e^{\ ij\zeta}u_{1}(t+j\pi)=e^{\ ij\zeta}u_{1}(t)
\]
and%
\[
z_{j+1}(t)=z_{1}(t+j\pi)=(-1)^{j}z_{1}(t).
\]
Thus, there are two $m$-polygons which oscillate vertically, one with
$z_{1}(t)$ and the other with $-z_{1}(t)$. Furthermore, the projection of the
two $m$-polygons in the plane is always a $2m$-polygon. These solutions are
known as Hip-Hop orbits.
\end{remark}

See \cite{GaIz13} for a general description of the symmetries

\subsection{Bifurcation theorem}

The linearization of the system at the polygonal equilibrium is a
${3(n+1)}\times{3(n+1)}$ matrix, which is non invertible due to the rotational
symmetry. In \cite{GaIz13} one finds a change of variables that organize the
spaces $W_{k}$'s of similar irreducible representation of $\Gamma_{a}\times
S^{1}$, and also simplify the analysis of the spectrum . This is, the arrange
of the subspaces of similar irreducible representations gives a decomposition
of the linearization in $2n$ blocks, $n$ of them for the spatial coordinates,
given in \cite{GaIz13}, and $n$ of them for the planar coordinates, given in
\cite{GaIz11}.

Applying orthogonal degree to the reduced bifurcation map, one find{s} that
the degree has one component for each one of these $2n$ blocks, when there
{are} no resonances. In the case of the satellite there were only two
components. Each component has one number $\eta(\nu)$ which is the change of
Morse index of the corresponding block. By the existence property of the
degree, there is one bifurcation branch starting from $(a,\nu_{0})$ each time
$\eta(\nu_{0})\neq0$, with the symmetries of the corresponding block.

In this way one get the following theorems, see \cite{GaIz13} for details.

\begin{theorem}
For $n\geq3$ and each $k\in\{2,...,n-2\}$, the polygonal equilibrium has a
global bifurcation of planar periodic solutions with symmetries
$\mathbb{\tilde{Z}}_{n}(k)$, if $\mu\in(-s_{1},\mu_{k})$, and two global
bifurcations if $\mu\in(m_{+},\infty)$.

For $n\geq7$ and each $k\in\{1,n-1\}$, the polygonal equilibrium has two
global bifurcation branches of planar periodic solutions with symmetries
$\mathbb{\tilde{Z}}_{n}(k)$ when $\mu>m_{+}$.
\end{theorem}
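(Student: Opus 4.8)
The plan is to reduce the analysis to counting the changes of Morse index $\eta_{\tilde{\mathbb{Z}}_n(k)}(\nu)$ in each planar block $M_k(\nu)$, exactly as in the bifurcation theorem scheme described above, and to show that for the indicated ranges of $\mu$ these numbers are nonzero (giving one bifurcation) or have a nonzero jump at two distinct values of $\nu$ (giving two). First I would recall from \cite{GaIz11} the explicit form of the $6\times 6$ planar block $M_k(\nu)=\nu^2 I-2i\nu\,\mathrm{diag}(J,0)+D^2 V(a)$ attached to the irreducible representation $W_k$; since the equilibrium is planar, $D^2V(a)$ restricted to these coordinates is built from the Hessian of the Newtonian potential of the polygon, whose entries are explicit trigonometric sums in $\zeta=2\pi/n$ depending on $\mu$ through $\omega=\mu+s_1$. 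The key object is $\det M_k(\nu)$ as a polynomial in $\nu$: its real positive roots are the candidate bifurcation frequencies $\nu_0$, and the sign of $\eta$ there is read off from how the eigenvalues of $M_k(\nu)$ cross zero as $\nu$ increases through $\nu_0$.

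Next I would carry out the eigenvalue bookkeeping. Because the action of $S^1$ on $W_k$ is $e^{i\varphi}$ and of $(\zeta,\zeta)$ is $e^{ik\zeta}$, the block $M_k(\nu)$ decomposes further into scalar-type pieces in a suitable complex basis; one tracks the trace $T_k$ and determinant $D_k$ of the relevant $2\times2$ reduction (the planar Hessian in the $W_k$-coordinates), and the condition for bifurcations becomes the analogue of $D_k<0$ (one branch) versus $0<D_k<(2-T_k/2)^2$ (two branches) from the satellite theorem, now with $T_k,D_k$ functions of $\mu$ and $k$. I would then establish the two threshold facts: (i) for $\mu$ below $\mu_k$ (the value where relative equilibria bifurcate, from \cite{GaIz11}, Proposition 23) the determinant $D_k$ is negative, forcing exactly one real frequency crossing and hence one global planar branch with symmetry $\tilde{\mathbb{Z}}_n(k)$; and (ii) for $\mu$ above a threshold $m_+$ the quantities $T_k,D_k$ enter the regime $0<D_k<(2-T_k/2)^2$, yielding two crossings and two branches. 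For $k\in\{1,n-1\}$ the sign pattern of $D_k$ is different (these are the "soft" modes associated with translations/rotations of the polygon), so the single-branch regime is absent and one needs $n\geq 7$ and $\mu>m_+$ to land in the two-branch regime; this requires a separate, more delicate estimate of the trigonometric sums defining $T_1,D_1$.

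Finally, once $\eta_{\tilde{\mathbb{Z}}_n(k)}(\nu_0)\neq0$ is established, the existence and global properties follow formally: the orthogonal degree $\deg_\perp(f;\Omega)$ has a nonzero component $d_H$ on the isotropy $H=\tilde{\mathbb{Z}}_n(k)\times\mathbb{Z}_2$, so by the existence property the reduced map has zeros in $\Omega\cap V^H$, i.e. periodic solutions with those symmetries; and the standard Rabinowitz-type global alternative for the orthogonal degree (the sum of jumps in $\deg_\perp$ along a bounded noncolliding continuum must vanish) gives the global conclusion — the branch is unbounded in norm or period, or reaches a collision, or connects to other relative equilibria. The main obstacle I anticipate is step (ii): verifying the strict double inequality $0<D_k<(2-T_k/2)^2$ for all admissible $k$ simultaneously, since $T_k$ and $D_k$ are sums like $\sum_{j=1}^{n-1}\sin^{-3}(j\zeta/2)\,(\text{cos terms})$ whose dependence on $k$ is oscillatory; controlling these uniformly — and in particular pinning down the precise threshold $m_+$ and the need for $n\geq 7$ in the $k\in\{1,n-1\}$ case — is where the real work lies, and is presumably why \cite{GaIz11} treats it in detail.
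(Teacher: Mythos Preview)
Your proposal follows essentially the same route as the paper: reduce via Lyapunov--Schmidt, decompose the linearization into the planar blocks $M_k(\nu)$ indexed by the irreducible representations $W_k$ of $\Gamma_a\times S^1$, compute the Morse index jumps $\eta_{\tilde{\mathbb{Z}}_n(k)}(\nu_0)$, and invoke the existence and global-continuation properties of the orthogonal degree. You also correctly identify the link between the single-branch threshold and the value $\mu_k$ from Proposition~23 of \cite{GaIz11}, and that the $k\in\{1,n-1\}$ blocks behave differently because they carry the translational modes. Note that the paper itself is a survey and does not give the detailed spectral computation; it defers that to \cite{GaIz11} and \cite{GaIz13}, so your outline is as detailed as what the paper contains.

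Two small technical corrections. First, the planar block for $k\in\{2,\dots,n-2\}$ is not $6\times 6$: the central body lies in the trivial $\tilde{\mathbb{Z}}_n$-representation and contributes only to $k\equiv 0$, so for the intermediate $k$ the block is built from the polygon bodies alone and is lower-dimensional (a $2\times 2$ complex, i.e.\ $4\times 4$ real, block in the conventions of \cite{GaIz11}). Second, the satellite criterion $0<D<(2-T/2)^2$ is the right heuristic, but in the $n$-body case the Coriolis term and the off-diagonal coupling in $D^2V(a)$ make the actual two-branch condition on $(T_k,D_k)$ slightly different in form; the precise inequalities and the definition of $m_+$ are worked out in \cite{GaIz11}. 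Neither point is a genuine gap in your strategy---just be careful not to import the satellite formulas literally.
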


By global branch, one means that there is a continuum of solutions starting at
the ring configuration, and the continuum goes to infinity in the norm of the
solution or in the period, or goes to collision, or otherwise goes to other
relative equilibria in such a way that the sum of the jumps in the orthogonal
degrees is zero.

\begin{theorem}
The polygonal equilibrium has a global bifurcation of periodic solutions with
symmetries\ $\mathbb{\tilde{Z}}_{n}(k)\times\mathbb{\tilde{Z}}_{2}$ for each
$k\in\{1,...,n\}$. Except for a possible finite number of $\mu$'s and
frequencies, for $\mu$ positive, bounded and different from $\mu_{k}$, due to
resonances, these solutions are truly spatial, this is $z_{j}(t)\neq0 $ for
some $j$-th body.
\end{theorem}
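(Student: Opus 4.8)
The plan is to apply the orthogonal degree machinery set up in Section 1 to the reduced bifurcation map restricted, for each $k\in\{1,\dots,n\}$, to the $\mathbb{Z}_2$-antisymmetric block $W_k$ that carries the spatial coordinates. By the construction recalled just above, the $\Gamma\times S^1$-orthogonal degree of the reduced map decomposes into $2n$ components indexed by the irreducible blocks; the $n$ ``spatial'' blocks are fixed by the isotropy group $\mathbb{\tilde{Z}}_n(k)\times\mathbb{\tilde{Z}}_2$, and each carries an integer $\eta(\nu_0)$ equal to the change of the Morse index of the corresponding $6\times 6$ block $M_k(\nu)$ as $\nu$ crosses $\nu_0$. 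By the existence property of the orthogonal degree, a nonzero jump $\eta(\nu_0)\neq 0$ forces a continuum of zeros of the bifurcation map inside $\Omega\cap V^{\mathbb{\tilde{Z}}_n(k)\times\mathbb{\tilde{Z}}_2}$ emanating from $(a,\nu_0)$; the global alternative (norm blow-up, period blow-up, collision, or another relative equilibrium with cancelling degree jumps) is exactly the Rabinowitz-type conclusion built into $\Pi_\perp[S^V]$.

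The first step is therefore to write down the spatial block $M_k(\nu)=\nu^2 I - 2\nu\,\mathrm{diag}(\text{rotation generator}) + D^2 V_k(a)$ on $W_k$ coming from the linearization $f'(a,\nu)$ computed in \cite{GaIz13}, and to identify the eigenvalue curves $\nu\mapsto \lambda_j(\nu)$. Because the spatial degrees of freedom decouple from the planar ones at the equilibrium (the Hessian is block-diagonal between $u$ and $z$ since $a$ is planar), the relevant eigenvalues are those of the purely vertical part, and they are governed by the ``vertical stability'' coefficients of the polygonal ring. The second step is to show that for every $k$ there is at least one frequency $\nu_0>0$ at which an eigenvalue of $M_k(\nu)$ changes sign transversally, so that $\eta(\nu_0)\neq 0$; this is a counting argument on the vertical normal modes of the Maxwell ring, and it is where the hypothesis ``$\mu$ positive and bounded'' enters to guarantee the modes are nondegenerate and the crossing is genuine. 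The third step is to run the orthogonal-degree bifurcation theorem on each such block to produce the global branch with the claimed isotropy $\mathbb{\tilde{Z}}_n(k)\times\mathbb{\tilde{Z}}_2$, which by the symmetry analysis of the previous subsection means precisely $u_j(t)=u_j(t+\pi)$, $z_j(t)=-z_j(t+\pi)$ together with the $\mathbb{\tilde{Z}}_n(k)$ relations among the bodies.

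The final step is the ``truly spatial'' refinement: near the equilibrium the bifurcating branch is tangent to the eigenspace $W_k$, so generically $z_j(t)\not\equiv 0$, but this fails exactly when the vertical frequency is in resonance with a planar frequency, in which case the local branch could lie in a mixed eigenspace with vanishing $z$-component. One controls this by noting that the resonance condition $\nu_{\mathrm{spatial}} = (p/q)\,\nu_{\mathrm{planar}}$ between the two families of normal-mode frequencies of the ring is a real-analytic equation in $\mu$, hence has only finitely many solutions on any bounded interval of $\mu$ (and similarly only finitely many exceptional frequencies), which yields the stated exclusion of a finite set of $\mu$'s and frequencies and of the values $\mu_k$ where the planar block itself degenerates.

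I expect the main obstacle to be the second step — establishing that the vertical Morse index of the block $M_k(\nu)$ actually changes for \emph{every} $k\in\{1,\dots,n\}$, uniformly in the admissible range of $\mu$. This requires an explicit enough diagonalization of the vertical linearization of the Maxwell ring (the computation carried out in \cite{GaIz13}) to pin down the sign of each vertical eigenvalue and to verify transversality of the crossing; degeneracies hiding in the discrete sums $s_1$ and its spatial analogue are the delicate point, and handling them is precisely what forces the ``except for a possible finite number of $\mu$'s'' caveat.
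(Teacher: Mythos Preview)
Your plan follows essentially the same route as the paper: decompose the linearization at the polygonal equilibrium into the $2n$ irreducible blocks, apply the orthogonal degree to the reduced map on each spatial block $W_k$, and read off a global branch with isotropy $\mathbb{\tilde{Z}}_n(k)\times\mathbb{\tilde{Z}}_2$ whenever the Morse index of $M_k(\nu)$ jumps. That is exactly the scheme sketched here and carried out in \cite{GaIz13}.

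Two points deserve correction. First, you have misplaced where the hypotheses on $\mu$ enter. The existence of a global spatial branch for \emph{every} $k\in\{1,\dots,n\}$ is unconditional (given $\omega=\mu+s_1>0$): the vertical block satisfies $M_k(0)=D^2V|_{z}$, which is negative in the $z$-direction because gravity is attractive, while $M_k(\nu)\sim\nu^2 I$ for large $\nu$, so an eigenvalue crossing is automatic. Hence your ``second step'' is not the obstacle you fear; it is the easy step, and the restriction ``$\mu$ positive, bounded and different from $\mu_k$'' plays no role there. Those hypotheses belong entirely to the ``truly spatial'' refinement: $\mu\neq\mu_k$ is needed because at $\mu_k$ a planar block degenerates and the nonresonance comparison breaks down, while boundedness of $\mu$ is what makes the resonance equations $\nu_{\text{spatial}}=(p/q)\nu_{\text{planar}}$ analytic with only finitely many roots on the relevant interval. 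Second, the spatial block $M_k(\nu)$ is not $6\times6$; the $z$-coordinates contribute one real dimension per body, so after the $\mathbb{\tilde{Z}}_n$-isotypic decomposition the spatial blocks are low-dimensional (scalar for generic $k$), which is precisely why the crossing argument above is elementary.
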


\begin{remark}
Here, only the generic cases were exposed for simplicity. In \cite{GaIz13} all
cases of bifurcation from the polygonal equilibrium were studied for $n\geq2$.
In the paper \cite{GaIz13}, there is also a theorem for the general $n$-body
problem, where it is proven that any relative equilibrium has one bifurcation
of spatial like eight solutions, and that generically there are $n-1$ of these bifurcations.
\end{remark}

The planar bifurcation for $k=n$ consists of solutions with $u_{0}(t)=0$ and
$u_{j}(t)=e^{ij\zeta}u_{n}(t)$. This branch was constructed in an explicit way
in \cite{MS93}, by reducing the problem to a $6$-dimensional dynamical system
and a normal form argument.

The spatial bifurcation for $k=n$ is made of solutions where the ring moves as
a whole and the central body makes the contrary movement in order to stabilize
the forces, that is $z_{j}(t)=z_{1}(t)$ for $j\in\{1,...n\}$ and
$z_{0}=-nz_{j}$. This solution was called an oscillating ring in \cite{MS93}.

The spatial bifurcation for $k=n/2$ {has} the symmetries of the well known
Hip-Hop orbits. This kind of solutions appears first in the paper \cite{DTW83}
without the central body. Later on, in \cite{MS93} for a big central body in
order to explain the pulsation of the Saturn ring, where they are called kink
solutions. Finally, there is a proof in \cite{BCPS06} when there is no central body.

\begin{remark}
The same group of symmetries of the polygonal equilibrium for the $n$-body
problem is present also in the papers: In \cite{GaIz13} for charges instead of
bodies. In \cite{GaIz12} for vortices and traveling waves in almost parallel
filaments{, and} in \cite{GaIz14} for a periodic lattice of coupled nonlinear
Schr\"{o}dinger oscillators . Although there are many similarities with the
$n$ body problem, in particular in the change of variables, these results are
of a quite different nature.
\end{remark}

As we saw before, due to the rotational symmetry, the linearization at any
equilibrium has at least one dimensional kernel. In order to find bifurcation
of relative equilibria, in the paper \cite{GaIz11}, one get rid of this
degeneracy looking for solutions in fixed-point subspaces of some reflection,
where one is able to use ordinary degree or another method.

For bifurcation of periodic solutions, the polygonal equilibrium is fixed only
by the action of
\[
\tilde{\kappa}x_{j}(t)=\emph{diag}(1,-1,1)x_{n-j}(-t)\text{,}%
\]
which is a coupling between the reflection on the plane, a reversal of time,
and a permutation of bodies. This is the only reflection able to get ride of
the degeneracy.

However, when one restricts the problem to the fixed-point subspace of
$\tilde{\kappa}$, one may proves bifurcation of periodic solutions only for
the symmetries $k=n$ and $k=n/2$. For the remaining $k$'s, the linearization
on the fixed-point subspace of $\tilde{\kappa}$ is a complex matrix with
non-negative determinant as a real matrix. One could also use the gradient
structure and apply the results for bifurcation based on Conley index.
Actually, analytical studies with normal forms of high order and additional
hypotheses of non-resonance are proposed in \cite{CF08} for these cases.
However, this approach do not provide the proof of the existence of a global
continuum, something which follows from the application of the orthogonal
degree. This fact implies that one may not use a classical degree argument or
other simple analytical proofs to find the solutions presented here.

\begin{remark}
Variational techniques have been quite successful in treating the existence of
closed solutions. In particular, \cite{FT04}, \cite{F06} and \cite{F07},
classify all the possible groups which give periodic solutions which are
minimizers of the action without collisions. Thus, the issue is different from
ours, since one has the proof of the existence of a solution in the large,
with a specific symmetry. For choreographies, following the seminal paper
\cite{CM00}, with no central mass, {there are} studies with more than 3 bodies
in \cite{Ch01} and \cite{BT04}, for instance. In the case of hip-hop
solutions, these methods were successful in \cite{CV00} and \cite{TV07}. One
of the advantages of the orthogonal degree is that it applies to problems
which are not necessarily variational, but present conserved quantities.
\end{remark}

\end{document}